\numberwithin{equation}{section}
\newtheorem{lemma}{Lemma}[section]
\newtheorem{thm}[lemma]{Theorem}
\newtheorem{cor}[lemma]{Corollary}
\theoremstyle{definition}
\newtheorem{conjecture}[lemma]{Conjecture}
\theoremstyle{remark}
\def\O{\mathcal{O}}
\numberwithin{equation}{section} \numberwithin{table}{section}
\title{Approximating elements of the middle third Cantor set with dyadic rationals}
\author{Simon Baker\\ \\
\emph{School of Mathematics,} \\ \emph{University of Birmingham,} \\ \emph{Birmingham,  B15 2TT, UK.} \\ Email: simonbaker412@gmail.com\\}
\date{\today}
\begin{document}
\maketitle

\begin{abstract}
Let $C$ be the middle third Cantor set and $\mu$ be the $\frac{\log 2}{\log 3}$-dimensional Hausdorff measure restricted to $C$. In this paper we study approximations of elements of $C$ by dyadic rationals. Our main result implies that for $\mu$ almost every $x\in C$ we have
$$\#\left\{1\leq n\leq N:\left|x-\frac{p}{2^n}\right| \leq \frac{1}{n^{0.01}\cdot 2^{n}}\textrm{ for some }p\in\mathbb{N}\right\}\sim 2\sum_{n=1}^{N}n^{-0.01}.$$ This improves upon a recent result of Allen, Chow, and Yu which gives a sub-logarithmic improvement over the trivial approximation rate.\\

\noindent \emph{Mathematics Subject Classification 2010}: 	11A63, 28A80, 28D05. \\

\noindent \emph{Key words and phrases}: Diophantine approximation, middle third Cantor set, shrinking target problem.
\end{abstract}

\section{Introduction}
Diophantine approximation is the study of how well real numbers can be approximated by rational numbers. An interesting and well studied problem in this area is to determine  how well elements of a fractal set can be approximated by rational numbers. This problem has its origins in a paper of Mahler \cite{Mah} who posed the following two questions:
\begin{enumerate}
	\item How well can elements of the middle third Cantor set be approximated by rational numbers in the middle third Cantor set?
	\item How well can elements of the middle third Cantor set be approximated by rational numbers outside of the middle third Cantor set?
\end{enumerate} 
These two questions have generated a substantial amount of research (see \cite{ACY,Bak,BFR,Bug2,BugDur,FMS,FS2,FS,KL,KLW,LSV,Sch,Shp,SW,TWW,Weiss,Yu} and the references therein). We do not attempt to give an exhaustive overview of research in this area. We instead detail a few key papers that are particularly relevant to this work, and outline some important recent results. In \cite{LSV} Levesley, Salp, and Velani made significant progress with the first of Mahler's questions. They considered the following setup: Let $C$ denote the middle third Cantor set and $\mu$ be the $\frac{\log 2}{\log 3}$-dimensional Hausdorff measure restricted to $C$. Given $\psi:\mathbb{N}\to [0,\infty)$ we can associate the set
$$W_{3}(\psi):=\left\{x\in C:\left|x-\frac{p}{3^n}\right|\leq \frac{\psi(n)}{3^n}\textrm{ for infinitely many } (p,n)\in\mathbb{N}\times\mathbb{N}\right\}.$$
Theorem 1 of \cite{LSV} provides a simple criteria for determining $\mu(W_{3}(\psi)).$ This criteria is phrased in terms of the convergence/divergence of naturally occurring volume sums. As such, in the setting of the sets $W_{3}(\psi)$ this result provides a natural analogue of a well known theorem due to Khintchine \cite{Khi}. What prevents the work of Levesley, Salp and Velani from providing a complete solution to the first of Mahler's questions is the existence of rational numbers in $C$ whose base three expansion is infinite and eventually periodic. For such a rational number it is a challenging problem to determine what, if any, cancellation occurs between the numerator and the denominator when we express it using geometric series. This makes determining good rational approximations more difficult. This problem was considered in \cite{Bak} and \cite{TWW}. These papers establish an intrinsic analogue of Khintchine's theorem for self-similar measures with respect to a non-standard height function. This height function behaves well with respect to the underlying iterated function system\footnote{We refer the reader to \cite{Fal} for background on iterated function systems, self-similar measures, and self-similar sets.}. In \cite{SW} Simmons and Weiss proved that any non-atomic self-similar measure on $\mathbb{R}$ gives zero mass to the set of  badly approximable numbers. They in fact proved the stronger statement that for one of these self-similar measures, the orbit of a typical point under the action of the Gauss map equidistributes with respect to the Gauss measure. This result in particular applies to our measure $\mu$ which can be identified with a self-similar measure. In a recent breakthrough result, Khalil and Luethi \cite{KL} gave necessary conditions for a self-similar measure so that a complete analogue of Khintchine's theorem is satisfied. This result however requires the self-similar measure to be of sufficiently large dimension and in fact does not apply to $\mu$.

Motivated by the questions of Mahler and the works detailed above, in this paper we study rational approximations of elements of $C$ by dyadic rationals, i.e. rational numbers of the form $p/2^n$. Given $\psi:\mathbb{N}\to [0,\infty)$ we associate the set $$W_{2}(\psi):=\left\{x\in C:\left|x-\frac{p}{2^n}\right|\leq \frac{\psi(n)}{2^n}\textrm{ for infinitely many } (p,n)\in\mathbb{N}\times\mathbb{N}\right\}.$$ $W_{2}(\psi)$ has the following simple shrinking target reinterpretation. Let $T_{2}:\mathbb{R}/\mathbb{Z}\to\mathbb{R}/\mathbb{Z}$ be given by $T_{2}(x)=2x \mod 1$. Then we have $$W_{2}(\psi)=\left\{x\in C:d(T_{2}^{n}(x),0)\leq \psi(n)\textrm{ for infinitely many } n \in \mathbb{N}\right\}.$$ Here $d$ is the standard metric on $\mathbb{R}/\mathbb{Z}.$ With this shrinking target reinterpretation in mind, there is a natural inhomogeneous generalisation of $W_{2}(\psi)$. Given a sequence $\mathbf{x}=(x_n)_{n=1}^{\infty}\in(\mathbb{R}/\mathbb{Z})^{\mathbb{N}},$ we define $$W_{2}(\psi,\mathbf{x}):= \left\{x\in C:d(T_{2}^{n}(x),x_n)\leq \psi(n)\textrm{ for infinitely many } n\in \mathbb{N}\right\}.$$ The sets $W_{2}(\psi)$ and $W_{2}(\psi,\mathbf{x})$ will be the main focus of this paper. Our motivation for studying these sets comes not only from Mahler's questions, but also from the well known Furstenberg $\times 2$ $\times 3$ principle from Ergodic Theory. Loosely speaking, this principle states that a mathematical object cannot exhibit exceptional dynamical behaviour for both the map $T_2$ and the map $T_3$ ($T_3:\mathbb{R}/\mathbb{Z}\to \mathbb{R}/\mathbb{Z}$ given by $T_{3}(x)=3x \mod 1$). The first instance of this phenomenon being verified was in a paper of Furstenberg \cite{Fur}. He proved that if $X$ is a closed infinite subset of $\mathbb{R}/\mathbb{Z}$ that is invariant under $T_{2}$ and $T_{3}$ then $X=\mathbb{R}/\mathbb{Z}.$ This result gave rise to the so called  $\times 2$ $\times 3$ conjecture which aims to establish a measure theoretic analogue of this result. Significant progress was made towards this conjecture by Rudolph in \cite{Rud}. He proved that if $\nu$ is a probability measure that is invariant and ergodic for $T_{2}$ and $T_{3},$ then either $\nu$ has zero entropy for both $T_2$ and $T_3$ or $\nu$ is the Lebesgue measure \cite{Rud}. See \cite{Joh} for a further generalisation of this result. In \cite{Host} Host obtained the following strengthening of Rudolph's result. He proved that if $\nu$ is invariant and ergodic for $T_{2}$ and has positive entropy, then the orbit of $\nu$ almost every $x$ under $T_{3}$ equidistributes with respect to the Lebesgue measure. Note that this result still holds with the roles of $T_2$ and $T_3$ reversed. Further generalisations of Host's theorem were obtained in \cite{HocShm} and \cite{Lin}. See also \cite{Hoc} for a short proof of this theorem. We also refer the reader to \cite{ABS} and \cite{HocShm} where analogues of Host's theorem were established for self-similar measures. Further important results on the sum and intersection of a $T_{2}$-invariant set and a $T_{3}$-invariant set were obtained in \cite{HocShm2,Shmerkin,Wu}.

Despite the tremendous amount of activity that has been undertaken and is still ongoing to understand the $\times 2$ $\times 3$ principle, until recently no work had appeared which considered this principle in the context of shrinking targets problems. For the purpose of formulating a meaningful $\times 2$ $\times 3$ conjecture in the setting of shrinking targets, we recall the following well known result.
\begin{thm}
	\label{babyshrinking}
	 Let $\nu$ denote the Lebesgue measure restricted to $\mathbb{R}/\mathbb{Z}$. Then for any $\psi:\mathbb{N}\to [0,\infty)$ and $(x_n)\in(\mathbb{R}/\mathbb{Z})^{\mathbb{N}}$ we have $$\nu\Big(\left\{x\in\mathbb{R}/\mathbb{Z}:d(T_{2}^n(x),x_n)\leq \psi(n)\textrm{ for infinitely many }n\in\mathbb{N}\right\}\Big)=
	\begin{cases}
	0 \text{ if }\sum_{n=1}^{\infty}\psi(n)<\infty\\
	1 \text{ if }\sum_{n=1}^{\infty}\psi(n)=\infty.
	\end{cases}
	$$
\end{thm}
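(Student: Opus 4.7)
Set $A_n := \{x \in \mathbb{R}/\mathbb{Z} : d(T_2^n(x), x_n) \leq \psi(n)\} = T_2^{-n}(B(x_n, \psi(n)))$, so the set in the theorem is $\limsup_n A_n$. My strategy is a Borel--Cantelli argument. Discarding indices with $\psi(n) > 1/2$ (which only helps the divergence case, since then $A_n = \mathbb{R}/\mathbb{Z}$), $T_2$-invariance of Lebesgue measure gives $\nu(A_n) = 2\psi(n)$. If $\sum_n \psi(n) < \infty$ the convergence half of Borel--Cantelli is immediate.

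For the divergence half I would establish the quasi-independence estimate
\[
\bigl|\nu(A_m \cap A_n) - \nu(A_m)\nu(A_n)\bigr| \leq \frac{4\psi(n)}{2^{n-m}} \qquad (m < n).
\]
To obtain this, use $T_2^m$-invariance to rewrite
\[
\nu(A_m \cap A_n) = \nu\bigl(B(x_m,\psi(m)) \cap T_2^{-(n-m)}(B(x_n,\psi(n)))\bigr).
\]
The set $T_2^{-(n-m)}(B(x_n,\psi(n)))$ is a union of $2^{n-m}$ equispaced intervals, each of length $2\psi(n)\cdot 2^{-(n-m)}$, with consecutive centres at distance $2^{-(n-m)}$. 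The number of these small intervals lying wholly inside $B(x_m,\psi(m))$ equals $2\psi(m)\cdot 2^{n-m}$ up to boundary errors of at most two small intervals; multiplying through yields the claimed bound.

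Summing over $m < n \leq N$ and writing $S_N := \sum_{n \leq N} \nu(A_n)$, the error term is controlled by $\sum_{n \leq N} 4\psi(n) \sum_{k \geq 1} 2^{-k} = 2 S_N$, so $\sum_{m,n \leq N} \nu(A_m \cap A_n) \leq S_N^2 + O(S_N)$. The Kochen--Stone (Chung--Erd\H{o}s) inequality
\[
\nu\bigl(\limsup_n A_n\bigr) \geq \limsup_{N\to\infty} \frac{S_N^2}{\sum_{m,n \leq N} \nu(A_m \cap A_n)}
\]
combined with $S_N \to \infty$ then gives $\nu(\limsup_n A_n) = 1$. The only genuinely substantive step is the geometric counting underlying the quasi-independence bound; everything else is standard Borel--Cantelli machinery. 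Note that an appeal to ergodicity would be awkward here, because the shifting targets $x_n$ prevent $\limsup_n A_n$ from being $T_2$-invariant, so the direct Kochen--Stone route is preferable.
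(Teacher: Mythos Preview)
Your argument is correct. The quasi-independence bound is exactly the right ingredient: $T_2^{-(n-m)}(B(x_n,\psi(n)))$ is a union of $2^{n-m}$ arcs of length $2\psi(n)/2^{n-m}$ with centres equispaced at distance $2^{-(n-m)}$, and counting how many meet the arc $B(x_m,\psi(m))$ yields your estimate with the boundary error $O(2)$ arcs. Summing the errors geometrically gives $\sum_{m,n\le N}\nu(A_m\cap A_n)\le S_N^2+O(S_N)$, and Chung--Erd\H{o}s/Kochen--Stone then delivers $\nu(\limsup A_n)=1$. Your handling of the indices with $\psi(n)>1/2$ is also fine: either infinitely many such $n$ occur and $\limsup A_n=\mathbb{R}/\mathbb{Z}$ trivially, or only finitely many occur and they can be discarded.

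Note, however, that the paper does not itself prove this theorem: it simply cites Philipp \cite{Phi}, where in fact a stronger quantitative statement (an asymptotic for the number of $n\le N$ with $T_2^n(x)\in B(x_n,\psi(n))$) is established via a second-moment argument. Your Borel--Cantelli/Kochen--Stone route is the standard qualitative version of that approach and is entirely adequate for the $0$--$1$ law stated. Your closing remark is also apt: because the targets $x_n$ vary, $\limsup_n A_n$ need not be $T_2$-invariant, so one cannot simply upgrade a positive-measure conclusion to full measure by ergodicity; the Chung--Erd\H{o}s inequality already gives the limit $1$ directly, so no zero--one law is needed.
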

For a proof of Theorem \ref{babyshrinking} we refer the reader to \cite{Phi} where an additional asymptotic for the number of solutions is also obtained. With the above results in mind, in particular the theorem of Host, the following conjecture is natural.
\begin{conjecture}
	\label{Velani conjecture}
Let $\psi:\mathbb{N}\to [0,\infty)$ and $\mathbf{x}\in(\mathbb{R}/\mathbb{Z})^{\mathbb{N}}$, then
$$\mu(W_{2}(\psi,\mathbf{x}))=
\begin{cases}
	0 \text{ if }\sum_{n=1}^{\infty}\psi(n)<\infty\\
	1 \text{ if }\sum_{n=1}^{\infty}\psi(n)=\infty.
\end{cases}
$$
\end{conjecture}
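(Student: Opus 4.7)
The plan is to prove Conjecture \ref{Velani conjecture} via the Borel--Cantelli framework suggested by the shrinking target reformulation $W_2(\psi,\mathbf{x}) = \limsup_{n\to\infty} A_n$, where $A_n := \{x \in C : d(T_2^n(x), x_n) \leq \psi(n)\}$. The two ingredients one needs are a uniform upper bound $\mu(A_n) \leq C\psi(n)$, valid for every target $x_n$, and an approximate pairwise independence statement $\mu(A_m \cap A_n) \leq C\mu(A_m)\mu(A_n)$ whenever $|n-m|$ exceeds some threshold. From these, the convergence half of the conjecture is immediate from the first Borel--Cantelli lemma, while the divergence half follows from a Chung--Erd\H{o}s style quantitative Borel--Cantelli lemma applied to $(\mu,\{A_n\})$.

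For the one-point bound, observe that $\mu(A_n)$ equals the mass assigned by the pushforward $(T_2^n)_*\mu$ to the ball $B(x_n,\psi(n))$, so what is required is a uniform $L^\infty$-type bound on the densities of the pushforwards $(T_2^n)_*\mu$. The natural tool is Fourier analysis: since $\mu$ is the infinite convolution $\ast_{j \geq 1} \tfrac{1}{2}(\delta_0 + \delta_{2/3^j})$, one has the Riesz product expression $\hat\mu(\xi) = \prod_{j\geq 1} \cos(2\pi \xi / 3^j)$, and $\widehat{(T_2^n)_*\mu}(k) = \hat\mu(2^n k)$. Plancherel together with a smooth mollification then converts summable $L^2$ bounds on $\{\hat\mu(2^n k)\}_{k\in\mathbb{Z}}$ into the required uniform pointwise density bound.

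For the pairwise estimate one exploits the self-similarity of $\mu$ to localise to level-$m$ base-$3$ cylinders, on each of which $\mu$ is a rescaled translated copy of itself. On such a cylinder, $T_2^n x$ can be rewritten in terms of $T_2^{n-m}$ applied to a rescaled copy of $\mu$ together with a deterministic translation inherited from $T_2^m$ of the cylinder centre, so a Fourier argument identical to the one used in the one-point bound (now applied to the rescaled measure) should yield $\mu(A_m\cap A_n) \lesssim \psi(m)\psi(n)$ once $n-m$ exceeds some explicit threshold. Contributions from small $n-m$ are absorbed into the implicit constant.

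The main obstacle, and the reason the conjecture should be considered hard, is the resonance phenomenon inside $\hat\mu(2^n k) = \prod_{j\geq 1} \cos(2\pi \cdot 2^n k / 3^j)$: whenever $2^n k$ is close to a rational with a power-of-$3$ denominator the product can remain of order one rather than decay, so the uniform summability estimate needed above is delicate. Proving effective decay along the base-$2$ orbit of an arbitrary integer $k$ is a sharp instance of the Furstenberg $\times 2\, \times 3$ problem in a quantitative Fourier-analytic guise, which is presumably exactly why the main theorem of the present paper settles for the much weaker regime in which $\psi(n)$ decays only sub-logarithmically, rather than establishing the conjecture in full generality.
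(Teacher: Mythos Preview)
The statement you are attempting to prove is Conjecture~\ref{Velani conjecture}, which the paper explicitly leaves open; there is no proof in the paper to compare against. The paper's main theorem establishes only the special case $\psi(n)=n^{-0.01}$ (a polynomial rate, not sub-logarithmic as you write in your final paragraph --- the sub-logarithmic result was the earlier work of Allen--Chow--Yu), and even then only an asymptotic counting statement rather than the full dichotomy.

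Your outline correctly identifies the shape of a Borel--Cantelli argument and, to your credit, correctly locates the obstruction in the lack of effective decay for $\hat\mu(2^n k)$. But the proposal does not constitute a proof: the two key inputs are simply asserted, not established. The ``uniform $L^\infty$-type bound on the densities of the pushforwards $(T_2^n)_*\mu$'' you invoke is precisely the unresolved issue --- it is not known that these pushforwards are absolutely continuous with uniformly bounded density, and the Riesz product formula you quote gives exactly the product whose behaviour you then admit is ``delicate''. Moreover, for the divergence half via Chung--Erd\H{o}s you need not only $\mu(A_n)\ll\psi(n)$ but also $\mu(A_n)\gg\psi(n)$, which you do not address and which is equally tied to the same Fourier-analytic obstacle. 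The quasi-independence step has the same problem: your ``Fourier argument identical to the one used in the one-point bound'' inherits the same unproved decay input. In short, your plan is a reasonable description of \emph{why} the conjecture is hard, but it is not a proof --- nor does the paper claim one.
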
 
Conjecture \ref{Velani conjecture} predicts that a $\mu$ typical point will exhibit the same behaviour as a Lebesgue typical point. In the special case where $\psi$ is monotonic and $\mathbf{x}$ takes the constant value zero, Conjecture \ref{Velani conjecture} is attributed to Velani in \cite{ACY}. We expect this conjecture to be true in this more general framework so formulate it this way.

The first paper to consider Conjecture \ref{Velani conjecture} and the sets $W_{2}(\psi)$ was a work by Allen, Chow, and Yu \cite{ACY}. Their main results are summarised below.

\begin{thm}[\cite{ACY}, Theorem 1.5]
	\label{Convergencethm}
If $$\sum_{n=1}^{\infty}\left(2^{-\log n/\log \log n \cdot \log \log \log n}\psi(n)^{\log 2/\log 3}+\psi(n)\right)<\infty$$ then $\mu(W_{2}(\psi))=0.$
\end{thm}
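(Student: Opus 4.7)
My plan is to use a convergence Borel--Cantelli argument. Write
$$E_n := \bigl\{x \in C : |x - p/2^n| \leq \psi(n)/2^n \text{ for some } p \in \mathbb{N}\bigr\},$$
so that $W_2(\psi) = \limsup_n E_n$. It therefore suffices to show $\sum_n \mu(E_n) < \infty$. Setting $s := \log 2/\log 3$ and
$$P_n := \bigl\{p \in \mathbb{Z} \cap [0, 2^n] : \mathrm{dist}(p/2^n, C) \leq \psi(n)/2^n\bigr\},$$
I would cover $E_n$ by the balls $B(p/2^n, \psi(n)/2^n)$ with $p \in P_n$. Ahlfors $s$-regularity of $\mu$ then yields
$$\mu(E_n) \ll |P_n| \cdot (\psi(n)/2^n)^s,$$
which is useful when $\psi(n)/2^n$ is small, while a trivial Lebesgue-volume comparison on intervals gives $\mu(E_n) \ll \psi(n)$ in the complementary regime. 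These two bounds correspond to the two terms appearing in the hypothesis.

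The crux is to obtain an efficient upper bound on $|P_n|$. The naive Minkowski-dimension estimate $|P_n| \ll 2^{ns}$ is too weak, giving only $\sum \psi(n)^s$, which would force polynomial decay of $\psi$ merely to conclude measure zero. To improve on it, I would exploit a base-$2$ versus base-$3$ incompatibility: any dyadic $p/2^n$ lying within $O(2^{-n})$ of $C$ must have digit $1$ absent from a long initial block of its ternary expansion, and such digit avoidances are arithmetically rare for dyadic rationals. I would quantify this rarity by studying the orbit of $2^{-n}$ under the natural action on ternary digits, combined with a pigeonhole/subadditivity argument on how often the ternary digits of a dyadic rational can persistently avoid $1$; this is essentially a finite-combinatorial Diophantine problem about $2^n$ modulo powers of $3$. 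The target estimate is a sub-polynomial saving of the form
$$|P_n| \ll 2^{ns} \cdot 2^{-\log n \cdot \log\log\log n/\log\log n}.$$

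Feeding this into the Ahlfors bound would give $\mu(E_n) \ll 2^{-\log n \cdot \log\log\log n/\log\log n} \psi(n)^s$ in the small-$\psi$ regime, while $\mu(E_n) \ll \psi(n)$ handles the large-$\psi$ regime. Summing these under the theorem's hypothesis makes $\sum_n \mu(E_n)$ finite, and the first Borel--Cantelli lemma then delivers $\mu(W_2(\psi)) = 0$. The main obstacle will be the sub-logarithmic saving in the counting of $|P_n|$: everything else is a routine covering/Borel--Cantelli assembly, but making the incompatibility between binary denominators and ternary digit patterns quantitative — with the exact $(\log n)(\log\log\log n)/\log\log n$ gain rather than an implicit sub-polynomial factor — is the heart of the matter and where the real work lies.
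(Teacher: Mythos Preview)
This theorem is not proved in the present paper. It is quoted verbatim from Allen, Chow, and Yu \cite{ACY} as background (note the attribution ``\cite{ACY}, Theorem 1.5'' in the theorem header), and the paper's own work lies entirely on the divergence side, improving Theorem~\ref{Divergencethm} via the Fourier-analytic arguments of Section~2. So there is no proof here to compare your proposal against; for that you would have to consult \cite{ACY} directly.

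That said, your outline is broadly in the right spirit for a convergence Borel--Cantelli argument, and the identification of the counting problem for $|P_n|$ as the heart of the matter is correct. One point deserves a warning, however: your claimed ``trivial Lebesgue-volume comparison'' giving $\mu(E_n)\ll\psi(n)$ is not trivial and, as stated, not justified. The measure $\mu$ is singular with respect to Lebesgue measure, so bounding the Lebesgue measure of $\bigcup_p B(p/2^n,\psi(n)/2^n)$ by $2\psi(n)$ says nothing about its $\mu$-measure. The naive Ahlfors-regular covering argument only yields $\mu(E_n)\ll\psi(n)^{s}$, which is \emph{weaker} than $\psi(n)$ for small $\psi(n)$ since $s<1$. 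Obtaining the genuine bound $\mu(E_n)\ll\psi(n)$ requires an equidistribution input --- quantitative information about how $(T_2^n)_*\mu$ approximates Lebesgue measure --- and that is a substantive step, not bookkeeping. Whatever argument \cite{ACY} uses to produce the $\psi(n)$ summand in their hypothesis, it is unlikely to be the one-line comparison you describe.
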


\begin{thm}[\cite{ACY}, Theorem 1.9]
	\label{Divergencethm}
For $\psi(n)=2^{-\log \log n/\log \log \log n},$ we have $\mu(W_{2}(\psi))=1$.
\end{thm}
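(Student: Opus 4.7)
The natural approach is a divergence Borel--Cantelli argument applied to the sets
$$A_n := \left\{x \in C : \left|x - \frac{p}{2^n}\right| \leq \frac{\psi(n)}{2^n} \text{ for some } p \in \mathbb{Z}\right\},$$
so that $W_2(\psi) = \limsup_n A_n$. Using Ahlfors regularity of $\mu$ with dimension $s = \log 2/\log 3$, together with the fact that only $\asymp 2^{ns}$ dyadic rationals $p/2^n$ lie within $O(2^{-n})$ of $C$, one first expects
$\mu(A_n) \asymp \psi(n)^s$.
The resulting series $\sum \psi(n)^s = \sum 2^{-s \log \log n/\log \log \log n}$ diverges, since for large $n$ one has $\psi(n)^s \geq (\log n)^{-s \log 2}$, leaving a wide margin.

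The main content is then a quasi-independence estimate of the form
$$\mu(A_m \cap A_n) \leq K\,\mu(A_m)\,\mu(A_n)$$
whenever $n - m$ exceeds a slowly growing gap function $g$, combined with the trivial bound $\mu(A_m \cap A_n) \leq \mu(A_n)$ for closer pairs (whose contribution to the double sum over $\{m,n \leq N\}$ is $O\big(g(N) \sum_{n\leq N}\mu(A_n)\big)$, which is $o\big((\sum_{n\leq N}\mu(A_n))^2\big)$ because the partial sums diverge). To produce this estimate one conditions on $A_m$, localising $x$ in a union of balls of radius $\psi(m)/2^m$ around dyadic rationals of denominator $2^m$, and then controls, inside each such ball, the $\mu$-measure of the set where $T_2^{n-m}$ lands in the target $B(0,\psi(n))$. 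After sufficiently many doublings, the pushforward of $\mu$ restricted to a fine enough triadic subcylinder should resemble $\mu$ itself (in an $L^\infty$ or Fourier sense), yielding the required factor of $\mu(A_n)$.

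The decisive obstacle is precisely this correlation estimate, because $T_2$ does not preserve $\mu$ and the $2$-adic targets interact awkwardly with the $3$-adic structure of $C$. The slow decay of $\psi$ is essential: it leaves many triadic scales between levels $m$ and $n$ over which the scale mismatch can be averaged out, and this is exactly why the theorem does not obviously extend to every $\psi$ with $\sum\psi(n)^s = \infty$ at the natural critical rate. Once quasi-independence is in place, divergence Borel--Cantelli yields $\mu(\limsup A_n) > 0$; running the same argument with $\mu$ replaced by its normalised restriction to an arbitrary triadic cylinder $J$ produces $\mu(W_2(\psi) \cap J) \geq c\,\mu(J)$ uniformly in $J$, and Lebesgue differentiation for the doubling measure $\mu$ upgrades this to full measure.
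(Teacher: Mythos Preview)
This paper does not itself prove Theorem~\ref{Divergencethm}; it is quoted from \cite{ACY}. What the paper does prove is the stronger Theorem~\ref{Main thm}, whose method is quite different from your outline: rather than a set-by-set divergence Borel--Cantelli argument, the paper smooths the target indicators to $C^2$ bumps $f_n$, expands $\sum_{n\le N}f_n(2^nx)$ in Fourier series, and controls the first and second moments against $\mu$ using Lemma~\ref{Casselslemma} (Cassels' estimate that $\left|\int e(l2^n x)\,d\mu\right|\ll N^{-0.078}$ for all but $O(N^{0.922})$ values of $n\le N$). This averaged Fourier decay handles all cross terms in the variance simultaneously and yields an asymptotic count, not merely infinitely many solutions; it is also precisely the quantitative input that allows the paper to reach $\psi(n)=n^{-0.01}$ rather than the sub-logarithmic rate of \cite{ACY}.

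Your outline has a genuine gap at the step you yourself call the decisive obstacle. The quasi-independence bound $\mu(A_m\cap A_n)\le K\,\mu(A_m)\mu(A_n)$ is asserted via ``after sufficiently many doublings, the pushforward of $\mu$ restricted to a fine enough triadic subcylinder should resemble $\mu$ itself'', but this is exactly the quantitative $\times 2$--$\times 3$ equidistribution statement that has to be supplied, and your sketch gives no mechanism for it. A second, smaller issue: the heuristic $\mu(A_n)\asymp\psi(n)^s$ is only an upper bound as written. Your covering argument counts $\asymp 2^{ns}$ dyadic points within $O(2^{-n})$ of $C$ and multiplies by $(\psi(n)/2^n)^s$, but nothing guarantees that a positive proportion of the balls $B(p/2^n,\psi(n)/2^n)$ actually meet $C$ once $\psi(n)<1/2$; indeed the expectation consistent with Conjecture~\ref{Velani conjecture} is $\mu(A_n)\approx 2\psi(n)$, not $\psi(n)^s$. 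For the very slowly decaying $\psi$ in the statement these issues may well be surmountable, but the proposal does not carry them out.
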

For the purposes of this paper the more relevant result is Theorem \ref{Divergencethm}. This result gives a sub-logarithmic improvement over the trivial approximation function given by $\psi(n)= 1$ for all $n$. For this trivial approximation function we obviously have $W_{2}(\psi)=C.$ The main result of this paper is the following theorem.
\begin{thm}
	\label{Main thm}
Let $\mathbf{x}\in(\mathbb{R}/\mathbb{Z})^{\mathbb{N}}.$ Then for $\mu$ almost every $x\in C$ we have  $$\#\left\{1\leq n\leq  N:d(T_{2}^n(x),x_n) \leq \frac{1}{n^{0.01}}\right\}\sim 2\sum_{n=1}^{N}n^{-0.01}.$$ 
\end{thm}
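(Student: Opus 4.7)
The plan is a quantitative Borel--Cantelli argument based on the self-similarity of $\mu$ and a Fourier-analytic equidistribution input. Write $A_n := \{x\in C : d(T_2^n(x),x_n)\leq n^{-0.01}\}$ and $\Phi(N):=\sum_{n=1}^{N}\mu(A_n)$. The theorem will follow from (i) the mean estimate $\mu(A_n) = 2n^{-0.01}(1+o(1))$ uniformly in $x_n$, which yields $\Phi(N)\sim 2\sum_{n=1}^N n^{-0.01}$; (ii) a quasi-independence estimate
\[\sum_{1\leq m<n\leq N}\bigl(\mu(A_m\cap A_n) - \mu(A_m)\mu(A_n)\bigr) = o(\Phi(N)^2);\]
and (iii) a standard quantitative divergence Borel--Cantelli lemma of the Sprindzuk--Harman type (as in \cite{LSV,Phi}) that converts (i) and (ii) into the almost-sure asymptotic.

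For (i) I would fix $k=k(n) := \lceil n\log 2/\log 3 + n^{0.5}\rceil$ and use the self-similar decomposition $\mu = 2^{-k}\sum_{w\in\{0,1\}^k}\mu_w$, where $\mu_w$ is the rescaled copy of $\mu$ on the level-$k$ cylinder $I_w\subset C$ of length $3^{-k}$. Since $2^n\cdot 3^{-k(n)}\ll n^{-0.01}$, each image $T_2^n(I_w)$ is an arc of length much smaller than $n^{-0.01}$, so the contribution of $\mu_w$ to $\mu(A_n)$ is, up to negligible boundary error, either $2^{-k}$ or $0$ according to whether the left endpoint $\xi_w$ of $I_w$ satisfies $2^n\xi_w\in B(x_n,n^{-0.01})\bmod 1$. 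The resulting counting problem is attacked via Erd\H{o}s--Tur\'an, where the key character sum is
\[\sum_{w\in\{0,1\}^k}e^{2\pi i m\cdot 2^n\xi_w} = \prod_{j=1}^{k}\bigl(1 + e^{2\pi i m\cdot 2^{n+1}/3^j}\bigr),\]
whose modulus equals $2^k\prod_{j=1}^{k}|\cos(2\pi m\cdot 2^n/3^j)|$. Controlling these products for frequencies $m$ up to a small power of $n$ gives the claimed asymptotic for $\mu(A_n)$.

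For (ii) the same decomposition at level $k=k(n)$ reduces $\mu(A_m\cap A_n)$ (for $m<n$) to a joint equidistribution of $(2^m\xi_w,2^n\xi_w)\bmod 1$ on the two-torus; a two-dimensional Erd\H{o}s--Tur\'an argument using the same product formula yields $\mu(A_m\cap A_n) = \mu(A_m)\mu(A_n)(1+o(1))$ whenever $n-m$ exceeds a small power of $n$. For the sparse set of near-diagonal pairs I would use the trivial bound $\mu(A_m\cap A_n)\leq\mu(A_m)$, whose cumulative contribution is negligible relative to $\Phi(N)^2\sim N^{1.98}$. Combining the three steps yields the theorem.

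The principal obstacle is the discrepancy estimate underpinning step (i): the product $\prod_{j}|\cos(2\pi m\cdot 2^n/3^j)|$ fails to decay for an exceptional set of frequencies $m$ where $m\cdot 2^n$ is abnormally close to integer multiples of powers of $3$. Identifying and controlling these ``bad'' $m$, while still exploiting the cancellation in the good majority, is the technical heart of the argument, and the slack $n^{0.5}$ in the choice of $k(n)$ is what provides the margin needed to absorb these exceptional frequencies with plenty of room to beat the Erd\H{o}s--Tur\'an threshold $n^{-0.01}$. Once this one-dimensional estimate is in place the joint version in (ii) is essentially the same computation performed with two simultaneous Erd\H{o}s--Tur\'an expansions.
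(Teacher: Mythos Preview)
Your overall architecture---first and second moment estimates feeding a quantitative Borel--Cantelli argument---matches the paper's, and you correctly identify the cosine product $\prod_j|\cos(\pi l 2^n/3^j)|=|\hat\mu(l2^n)|$ as the central object. But there is a genuine gap in step~(i). The pointwise claim $\mu(A_n)=2n^{-0.01}(1+o(1))$ would require, via your Erd\H os--Tur\'an expansion, that $|\hat\mu(m2^{n+1})|$ be small for the first few frequencies $m$ at \emph{every} $n$; already for $m=1$ this amounts to quantitative decay of $\hat\mu(2^n)$, which is tied to the base-$3$ digit distribution of $2^n$ and is not known. For a bad $n$ the $m=1$ term alone can be of order~$1$, and no amount of slack in $k(n)$ rescues the discrepancy bound (your finite cosine product is termwise at least the infinite one). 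You have therefore mislocated the exceptional set: it is not a sparse set of frequencies $m$ for fixed $n$, but a sparse set of \emph{times} $n$ for each fixed frequency. The same issue infects step~(ii), where the claimed pairwise asymptotic $\mu(A_m\cap A_n)=\mu(A_m)\mu(A_n)(1+o(1))$ is again a pointwise equidistribution statement you cannot establish.

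The paper avoids this by never asserting anything about an individual $n$. Its key input (Lemma~2.2, following Cassels) is that for each fixed $l\neq 0$,
\[
\#\bigl\{0\le n<N:|\hat\mu(l2^n)|>C_1N^{-0.078}\bigr\}\le C_2N^{0.922},
\]
proved by showing that $(4^n)_{0\le n<3^r}$ runs through all residues $\equiv 1\pmod 3$ modulo $3^{r+1}$, so the relevant base-$3$ digit blocks of $l4^n$ hit every word in $\{0,1,2\}^r$ exactly once, and Hoeffding then bounds the number of $n$ with too few $1$'s. One replaces the indicator of $A_n$ by smooth $C^2$ majorants/minorants $f_n$, expands in Fourier series truncated at a small power of $N$, and sums over $n$ \emph{before} estimating: for each fixed frequency $l$ the bad $n$ contribute at most $O(N^{0.922})$ terms of size $O(1)$, negligible against $\Phi(N)\asymp N^{0.99}$. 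This yields $\int\sum_{n\le N}f_n(2^nx)\,d\mu=\sum_n c_{0,n}+O(N^{0.96}\log N)$ and, by the same mechanism applied to the cross terms $\hat\mu\bigl((l+j2^{m-n})2^n\bigr)$, a variance bound $O(N^{1.977}\log N)$. The almost-sure asymptotic is then extracted by Chebyshev along the subsequence $N^{1000}$ plus interpolation, rather than Sprind\v{z}uk--Harman, though that last difference is cosmetic. Your discretisation at level $k(n)$ is a legitimate alternative to the smooth Fourier truncation, but the argument only goes through once you reverse the order of summation and average over $n$ via Cassels' lemma; as written, both (i) and (ii) rest on an equidistribution input that is out of reach.
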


If in Theorem \ref{Main thm} we take $\mathbf{x}$ to be the constant sequence that always takes the value $0$, we obtain the following improvement over the full measure result of Allen, Chow, and Yu. We formulate this result in terms of rational approximations by dyadic rationals.

\begin{cor}
	\label{Cor2}
For $\mu$ almost every $x\in C$ we have  $$\#\left\{1\leq n\leq  N:\left|x-\frac{p}{2^n}\right| \leq \frac{1}{n^{0.01}\cdot 2^{n}}\textrm{ for some }p\in\mathbb{N}\right\}\sim 2\sum_{n=1}^{N}n^{-0.01}.$$
\end{cor}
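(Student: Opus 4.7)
My strategy is a quantitative Borel--Cantelli argument. Write $\psi(n)=n^{-0.01}$ and $A_n:=\{x\in C:d(T_2^n(x),x_n)\leq\psi(n)\}$. By the Erd\H{o}s--Sprind\v{z}uk strong Borel--Cantelli lemma, to obtain the desired asymptotic for $\mu$-a.e.\ $x$ it suffices to prove the first moment estimate
\begin{equation*}
\sum_{n\leq N}\mu(A_n)=2\sum_{n\leq N}\psi(n)+o\Bigl(\sum_{n\leq N}\psi(n)\Bigr)
\end{equation*}
together with the quasi-independence bound
\begin{equation*}
\sum_{1\leq m,n\leq N}\bigl(\mu(A_m\cap A_n)-\mu(A_m)\mu(A_n)\bigr)=O\Bigl(\sum_{n\leq N}\mu(A_n)\Bigr).
\end{equation*}

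For the first moment estimate I would Fourier expand the indicator of $B(x_n,\psi(n))\subset\mathbb{R}/\mathbb{Z}$ as $2\psi(n)+\sum_{k\neq 0}a_k(n)e^{2\pi iky}$ with $|a_k(n)|\leq\min\bigl(2\psi(n),(\pi|k|)^{-1}\bigr)$; integrating $\mathbf{1}_{B(x_n,\psi(n))}\circ T_2^n$ against $\mu$ then gives $|\mu(A_n)-2\psi(n)|\leq\sum_{k\neq 0}|a_k(n)||\hat\mu(2^nk)|$. Using the Riesz product $\hat\mu(\xi)=e^{-\pi i\xi}\prod_{j\geq 1}\cos(2\pi\xi/3^j)$ for the Cantor measure, the task reduces to proving, for each nonzero integer $k$, a power-saving bound of the form $\sum_{n\leq N}|\hat\mu(2^nk)|\ll_k N^{1-\delta}$ for some absolute $\delta>0$. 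This is where the arithmetic of $2$ modulo powers of $3$ enters: because $2$ is a primitive root modulo every power of $3$, the residues $2^n\bmod 3^j$ equidistribute over a large cyclic subgroup of $(\mathbb{Z}/3^j\mathbb{Z})^*$, so for most $n$ sufficiently many factors $|\cos(2\pi\cdot 2^nk/3^j)|$ are bounded away from $1$ to force the product to be small.

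For the quasi-independence estimate I would apply a double Fourier expansion to $\mathbf{1}_{A_m}\mathbf{1}_{A_n}$: the $(0,0)$ Fourier mode produces the main term $4\psi(m)\psi(n)$, the single-index modes combine with $\mu(A_m)\mu(A_n)$ (once that product is itself Fourier expanded) to cancel up to an acceptable error, and the residual cross terms with $j,k\neq 0$ involve frequencies $2^mj+2^nk$. For $m\neq n$ these frequencies have modulus comparable to $2^{\max(m,n)}$ times a nonzero integer, so the same Fourier decay input controls them. Summing over pairs and using the dyadic separation of scales $2^n$ then delivers the required $O(\sum\mu(A_n))$ bound.

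The main obstacle is the Fourier decay estimate for $\hat\mu$ along $\{2^nk\}$: it must deliver a genuine power saving in $N$ with sufficiently mild dependence on $k$ to accommodate $\psi(n)=n^{-0.01}$. Allen, Chow, and Yu \cite{ACY} obtained only a sub-logarithmic saving, which is why their admissible $\psi$ sits only just below the trivial rate; pushing this to a polynomial rate requires a quantitative (and uniform in $j$) equidistribution statement for $2^n\bmod 3^j$ combined with the cancellation built into the cosine Riesz product. Once that estimate is in hand, the two Borel--Cantelli conditions above fall into place and Theorem \ref{Main thm} follows.
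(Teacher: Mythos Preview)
Your overall strategy --- Fourier expand, feed in a power-saving estimate for $\hat\mu$ along the sequence $\{2^nk\}$, then run a second-moment/Borel--Cantelli argument --- is exactly the route the paper takes to prove Theorem~\ref{Main thm} (from which the corollary is the special case $x_n\equiv 0$). The key arithmetic input you identify is the content of the paper's Lemma~\ref{Casselslemma}: using that $4\equiv 1\pmod 3$ but $4\not\equiv 1\pmod 9$ (equivalently, your ``$2$ is a primitive root mod $3^j$''), one shows that the number of $n<N$ with $|\hat\mu(2^nk)|>C_1N^{-0.078}$ is $\le C_2N^{0.922}$, with $C_1,C_2$ \emph{independent of $k$}. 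Note the uniformity: you wrote $\ll_k$, but the paper's bound is uniform in $k$, and you will need this for the second moment where the relevant frequencies $l+j2^{m-n}$ range over a growing set.

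There is, however, a genuine quantitative gap in your plan. The quasi-independence bound you state,
\[
\sum_{1\le m,n\le N}\bigl(\mu(A_m\cap A_n)-\mu(A_m)\mu(A_n)\bigr)=O\!\left(\sum_{n\le N}\mu(A_n)\right)=O(N^{0.99}),
\]
is far stronger than what the available Fourier input delivers. With the Cassels estimate one only gets a variance of order $N^{1.977}\log N$ (this is the paper's Lemma~\ref{Lemma2}), which is barely smaller than $\Phi(N)^2\asymp N^{1.98}$: the power saving is about $N^{-0.003}$, not $N^{-0.99}$. The Erd\H os--Sprind\v zuk lemma in the form you invoke (with $O(\Phi(N))$ on the right) therefore does not apply. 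The paper circumvents this by abandoning that lemma and instead applying Chebyshev along the extremely sparse subsequence $N^{1000}$, so that $N^{1977}/N^{1979.8}$ is summable, and then interpolating between consecutive points of the subsequence. You should either do the same, or cite a version of the quantitative Borel--Cantelli lemma (\`a la G\'al--Koksma/Philipp) that only requires the variance to be $O(\Phi(N)^{2-\delta})$ for some $\delta>0$.

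Two smaller remarks. First, the paper replaces the indicator $\mathbf 1_{B(x_n,\psi(n))}$ by $C^2$ bump functions $f_n$ and uses Jackson's inequality to truncate the Fourier series with a controllable tail; this avoids the slowly convergent $\sum_k 1/|k|$ that arises from raw indicators and makes the bookkeeping in the second-moment estimate cleaner. Second, your handling of the cross terms (``for $m\neq n$ these frequencies have modulus comparable to $2^{\max(m,n)}$ times a nonzero integer'') glosses over the possibility $l+j2^{m-n}=0$; the paper deals with this explicitly by separating the range $m-n\le\lfloor 0.033\log_2 N\rfloor$, where such cancellation can occur, and bounding that piece trivially.
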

Corollary \ref{Cor2} improves upon Theorem \ref{Divergencethm} in two ways. First of all it establishes a polynomial approximation rate as opposed to a sub-logarithmic approximation rate. It also gives an asymptotic for the number of solutions to these Diophantine inequalities. 

Theorem \ref{Main thm} also has the following nice consequence for the complexity of the dyadic expansion of a $\mu$ typical point.
\begin{cor}
	\label{Cor}
	For $\mu$-almost every $x$ its dyadic expansion $(a_n)_{n=1}^{\infty}\in\{0,1\}^{\mathbb{N}}$ satisfies $a_{n+1}\ldots a_{n+\lfloor 0.01\log_{2}n\rfloor}=\underbrace{0\ldots 0}_{\lfloor 0.01\log_{2}n\rfloor}$ for infinitely many $n\in\mathbb{N}$.
\end{cor}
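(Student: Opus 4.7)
The corollary is a direct translation of Theorem \ref{Main thm} into the language of dyadic expansions. Writing $x = \sum_{i\geq 1} a_i 2^{-i}$ with $a_i \in \{0,1\}$, the map $T_2$ acts as the one-sided shift on binary digits, so $T_2^n(x)$ has binary expansion $0.a_{n+1}a_{n+2}\ldots$. Consequently, outside a $\mu$-null set of dyadic rationals, the block $a_{n+1}\ldots a_{n+k}$ is identically zero precisely when $T_2^n(x) < 2^{-k}$. Setting $k(n) := \lfloor 0.01 \log_2 n\rfloor$, we have $n^{-0.01} \leq 2^{-k(n)}$, so the problem reduces to producing, for $\mu$-almost every $x$, infinitely many $n$ with $T_2^n(x) < n^{-0.01}$.

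I would extract such $n$ from Theorem \ref{Main thm} applied to the inhomogeneous sequence $\mathbf{x}$ given by $x_n := n^{-0.01}$. For $n$ large enough that $2n^{-0.01} < 1$, the shrinking-target condition $d(T_2^n(x), n^{-0.01}) \leq n^{-0.01}$ is exactly the one-sided window $T_2^n(x) \in [0, 2n^{-0.01}]$, with no wraparound near $1$. The divergence side of Theorem \ref{Main thm} then yields infinitely many such $n$ for $\mu$-a.e.\ $x$, and since $2n^{-0.01} \leq 2^{-(k(n)-1)}$, for each such $n$ one obtains $a_{n+1}\ldots a_{n+k(n)-1} = 0\ldots 0$, which is the statement of the corollary up to a single digit.

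The one residual point is closing this off-by-one so that the number of zero digits is $k(n)$ rather than $k(n)-1$. The natural fix is to revisit the proof of Theorem \ref{Main thm}: its shrinking-target machinery, namely quantitative decorrelation of the events $\{T_2^n(x) \in E_n\}$ combined with a divergence Borel--Cantelli step, does not depend on the targets $E_n$ being symmetric intervals about a basepoint. Applying it directly to the half-ball $E_n = [0, n^{-0.01})$ produces infinitely many $n$ with $T_2^n(x) < n^{-0.01} \leq 2^{-k(n)}$ and hence $k(n)$ consecutive zeros in the dyadic expansion of $x$ starting at position $n+1$. The main obstacle is therefore the bookkeeping verification that the decorrelation/second-moment estimates underlying Theorem \ref{Main thm} carry over unchanged for one-sided targets; granted this, the remainder is routine.
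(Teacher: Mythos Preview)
Your proposal is correct and essentially matches the paper's own treatment: the paper likewise remarks that Corollary~\ref{Cor} does not formally follow from Theorem~\ref{Main thm} and that one must go back into the proof. The paper's specific fix is to note that the argument carries through with $n^{-0.01}$ replaced by $c\cdot n^{-0.01}$ for any $c>0$; combined with your inhomogeneous-centre trick (taking $x_n=c\,n^{-0.01}$ with $c\le 1/2$) this already closes the off-by-one without passing to asymmetric targets, though your one-sided variant works just as well since the Fourier estimates in Lemmas~\ref{Lemma1} and~\ref{Lemma2} never use the symmetry of the bump functions $f_n$.
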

Corollary \ref{Cor} doesn't formally follows from Theorem \ref{Main thm}. However, we can easily adapt our proof to show that for any $c>0$ Theorem \ref{Main thm} still holds with $n^{-0.01}$ replaced by $c\cdot n^{-0.01}$. It is then relatively straightforward to see that this stronger result implies Corollary \ref{Cor}.

We will prove Theorem \ref{Main thm} in the next section. We end this introductory section by introducing some notation that we will use throughout.\\

\noindent \textbf{Notation.} Let $f$ and $g$ be two complex valued functions defined on a set $S$. We say $f\ll g$ or $f=\O(g)$ if there exists $C>0$ such that $|f(x)|\leq C |g(x)|$ for all $x\in S$. We write $f\asymp g$ if $f\ll g$ and $g\ll f$. We also let $e(x)=e^{2\pi i x}$ for $x\in\mathbb{R}$. 

\section{Proof of Theorem \ref{Main thm}}
We split our proof of Theorem \ref{Main thm} into the following three sections. In the first section we recall an inequality of Jackson on uniform convergence of Fourier series, and we collect some useful estimates on the Fourier transform of $\mu$. In the second section we use these results to obtain certain integral estimates. In the final section we use these integral estimates to prove Theorem \ref{Main thm}. 
\subsection{Preliminary lemmas}
Given a $\mathbb{Z}$ periodic function $f:\mathbb{R}\to\mathbb{R}$ for which $\int_{0}^{1}|f(x)|\, dx<\infty$, the Fourier coefficients of $f$ are defined by the formula 
\begin{equation*}
\label{Fouriertransform2}
\hat{f}(l):=\int_{0}^{1}f(x)e^{-2\pi i l x}\, dx.
\end{equation*}  The following inequality due to Jackson gives a uniform convergence rate for the Fourier series of a $C^{1}$ function (see \cite[Thm. 1.IV]{Jac}).
\begin{lemma}
	\label{Jackson's inequality}
Let $f:\mathbb{R}\to\mathbb{R}$ be a $\mathbb{Z}$ periodic $C^{1}$ function whose derivative has a modulus of continuity $\omega$. Then $$\left|f(x)-\sum_{l=-N}^{N}\hat{f}(l)e^{2\pi i lx}\right|\leq K\frac{\log N}{N}\omega(1/ N)$$ for all $x\in \mathbb{R}$. Here $K>0$ is a constant that does not depend upon $f$ or $N$.
\end{lemma}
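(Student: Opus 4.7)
The plan is to reduce the statement to two standard ingredients: (i) Jackson's direct approximation theorem, which under our hypotheses yields a trigonometric polynomial $T^\ast_N$ of degree at most $N$ with $\|f - T^\ast_N\|_\infty \ll \omega(1/N)/N$, and (ii) the Lebesgue inequality $\|f - S_N f\|_\infty \leq (1 + L_N)\|f - T^\ast_N\|_\infty$, together with the Lebesgue-constant estimate $L_N := \|D_N\|_{L^1} \asymp \log N$. Combining these two facts immediately yields the stated bound of order $(\log N/N)\,\omega(1/N)$.

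For (i), I would follow the classical route via convolution with the Jackson kernel $J_n(t) = c_n \bigl(\sin(n\pi t/2)/\sin(\pi t)\bigr)^4$. This is a nonnegative trigonometric polynomial of degree at most $2(n-1)$, normalised so that $\int J_n = 1$, with the crucial moment bound $\int t^2 J_n(t)\,dt \ll 1/n^2$. Setting $T_n f := J_n \ast f$, the evenness of $J_n$ gives $\int t\, J_n(t)\,dt = 0$, so one can subtract the linear term from a first-order Taylor expansion of $f(x+t)-f(x)$ without changing $T_n f$. The remainder is then controlled pointwise by $\int |t|\,\omega(|t|)\,J_n(t)\,dt$; subadditivity of $\omega$ (giving $\omega(|t|) \leq (n|t|+1)\omega(1/n)$) combined with the second-moment estimate yields $\|f - T_n f\|_\infty \ll \omega(1/n)/n$, as required.

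For (ii), for any trigonometric polynomial $P$ of degree at most $N$ we have $S_N P = P$, so $f - S_N f = (f - P) - D_N \ast (f - P)$. Young's inequality then gives $\|f - S_N f\|_\infty \leq (1 + \|D_N\|_{L^1})\|f - P\|_\infty$, and taking $P = T^\ast_N$ completes the chain. The asymptotic $L_N \asymp \log N$ is the classical computation with the Dirichlet kernel. The main technical obstacle is the subadditivity manipulation in step (i) for a general modulus of continuity; the vanishing first moment of $J_n$ is essential here, since it is precisely what upgrades the crude bound $\omega(1/n)$ (valid for merely continuous $f$) into the sharper $\omega(1/n)/n$ bound that the $C^1$ hypothesis allows.
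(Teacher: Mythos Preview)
Your proposal is correct, but note that the paper does not actually prove this lemma: it simply quotes it as a known result and cites Jackson's book \cite[Thm.~1.IV]{Jac}. So there is no ``paper's own proof'' to compare against.

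That said, your sketch is precisely the classical route one finds in the literature (and essentially in Jackson's book itself): first establish the best-approximation estimate $E_N(f)\ll \omega(1/N)/N$ via convolution with a Jackson-type kernel, exploiting the vanishing first moment to gain the extra factor of $1/N$ beyond the zeroth-order Jackson bound; then transfer from best approximation to the partial sum via the Lebesgue inequality $\|f-S_Nf\|_\infty\le(1+L_N)E_N(f)$ with $L_N\asymp\log N$. The subadditivity step $\omega(|t|)\le(n|t|+1)\,\omega(1/n)$ and the second-moment bound for $J_n$ are exactly the right ingredients. Your argument stands on its own as a complete proof of the lemma the paper merely cites.
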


The key to proving Theorem \ref{Main thm} is the following lemma. It implies that the Fourier transform of $\mu$ decays to zero polynomially fast outside of a relatively small set of exceptions. 

\begin{lemma}
	\label{Casselslemma}
	Let $N\in\mathbb{N}$ and $l\in\mathbb{Z}\setminus \{0\}$. Then there exists $C_{1},C_{2}>0$ independent of $N$ and $l$ such that 
	$$\# \left\{0\leq n<N:\left|\int e(l2^nx)\, d\mu\right|> C_{1}N^{-0.078}\right\}\leq C_{2}N^{0.922}.$$
\end{lemma}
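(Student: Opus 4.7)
The plan is to derive the estimate from a second-moment bound for $|\hat\mu(l 2^n)|$ combined with Chebyshev's inequality. Since $\bigl|\int e(l 2^n x)\,d\mu\bigr| = |\hat\mu(l 2^n)|$, Chebyshev gives
$$\#\bigl\{0\leq n<N:|\hat\mu(l 2^n)|>C_1 N^{-0.078}\bigr\}\leq C_1^{-2}\,N^{0.156}\sum_{n=0}^{N-1}|\hat\mu(l 2^n)|^{2},$$
so it is enough to prove $\sum_{n=0}^{N-1}|\hat\mu(l 2^n)|^{2}\ll N^{0.766}$ uniformly in nonzero $l$.

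To bound this sum I would use the classical product formula
$$|\hat\mu(\xi)|^{2}=\prod_{k=1}^{\infty}\cos^{2}(2\pi\xi/3^{k}).$$
A short calculation gives $\hat\mu(3\xi)=e(\xi)\cos(2\pi\xi)\hat\mu(\xi)$, hence $\hat\mu(3\xi)=\hat\mu(\xi)$ whenever $\xi\in\mathbb{Z}$; factoring $l=3^{j}l'$ with $\gcd(l',3)=1$ thus reduces matters to the case $\gcd(l,3)=1$. For any $K\geq 1$, the trivial truncation $|\hat\mu(l 2^n)|^{2}\leq \prod_{k=1}^{K}\cos^{2}(2\pi l 2^n/3^k)$ holds. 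The crucial arithmetic input is that $2$ is a primitive root modulo $3^{K}$ for every $K\geq 1$, so the multiplicative order of $2$ in $(\mathbb{Z}/3^{K})^{*}$ equals $\phi(3^{K})=2\cdot 3^{K-1}$; together with $\gcd(l,3)=1$ this implies that as $n$ ranges over a single period, $l 2^n\bmod 3^{K}$ sweeps out $(\mathbb{Z}/3^{K})^{*}$ exactly once.

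Summing the truncated product over one full period of $n$ therefore produces $\sum_{b\in(\mathbb{Z}/3^{K})^{*}}\prod_{k=1}^{K}\cos^{2}(2\pi b/3^k)$. Writing $\cos^{2}\theta=\tfrac{1}{2}(1+\cos 2\theta)$ and expanding, this becomes $2^{-K}$ times a sum over subsets $S\subseteq\{1,\ldots,K\}$ of additive character sums on $\mathbb{Z}/3^{K}\mathbb{Z}$; orthogonality kills every term except $S=\emptyset$ (since for $S\neq\emptyset$ the frequency $2\sum_{k\in S}\epsilon_k/3^k$ is a nonzero element of $\tfrac{1}{3^K}\mathbb{Z}$), giving $\sum_{b=0}^{3^{K}-1}\prod\cos^{2}=(3/2)^{K}$. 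Because $\cos^{2}(2\pi\cdot 3b'/3)=1$, the contribution from $b$ divisible by $3$ peels off to the same quantity at level $K-1$, yielding the restricted sum $\tfrac{1}{2}(3/2)^{K-1}$. Multiplying by the number of full periods, at most $N/\phi(3^{K})+1$, gives $\sum_{n=0}^{N-1}|\hat\mu(l 2^n)|^{2}\ll N/2^{K}$ provided $\phi(3^{K})\leq N$. Choosing $K=\lceil 0.234\log_{2}N\rceil$ ensures both $3^{K}\lesssim N^{0.371}\leq N$ and $2^{K}\geq N^{0.234}$, producing the required bound $\ll N^{0.766}$. The main technical obstacle is the orthogonality computation producing the closed form $\tfrac{1}{2}(3/2)^{K-1}$; conceptually, the entire argument rests on $2$ generating $(\mathbb{Z}/3^{K})^{*}$ for every $K$, which converts averaging over periods in $n$ into averaging over the full unit group, and the numerical exponents $0.078$ and $0.922$ reflect the trade-off between making $3^K\leq N$ and making $2^K$ as large as possible.
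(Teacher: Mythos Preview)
Your argument is correct and takes a genuinely different route from the paper's. Both proofs exploit the fact that powers of $2$ equidistribute in $(\mathbb{Z}/3^{K})^{*}$, but they use this in opposite directions. The paper passes to powers of $4$ (splitting $n$ into even and odd), observes that $(4^{n})_{0\leq n<3^{r}}$ hits each residue class $\equiv 1\pmod 3$ modulo $3^{r+1}$ exactly once, and translates this into a statement about base-$3$ digits: the relevant block of digits of $l4^{n}$ ranges over all of $\{0,1,2\}^{r}$. Each digit equal to $1$ forces one cosine factor in the product formula to have modulus at most $1/2$; Hoeffding's inequality then bounds the number of $n$ with fewer than $r/8$ such digits. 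Your approach bypasses the digit combinatorics entirely: you use that $2$ itself is a primitive root modulo $3^{K}$, truncate the infinite product at level $K$, and compute the average of the truncated product over a full period \emph{exactly} via the expansion $\cos^{2}\theta=\tfrac12(1+\cos 2\theta)$ and orthogonality of additive characters on $\mathbb{Z}/3^{K}\mathbb{Z}$, obtaining the closed form $\tfrac12(3/2)^{K-1}$; Chebyshev then converts the second-moment bound into a count.

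Your method is cleaner in that it avoids the even/odd split and the large-deviation step, and the second-moment estimate $\sum_{n<N}|\hat\mu(l2^{n})|^{2}\ll N/2^{K}$ is of independent interest. It is also quantitatively stronger than you claim: taking $K\approx\log_{3}N$ (the largest value with $\phi(3^{K})\leq N$) gives $\sum_{n<N}|\hat\mu(l2^{n})|^{2}\ll N^{1-\log 2/\log 3}$, which via Chebyshev yields exponents well below the $0.922$ required by the lemma. The paper's digit-based approach, on the other hand, is closer to Cassels' original argument and makes the mechanism (individual cosine factors becoming small) more visible.
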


Lemma \ref{Casselslemma} essentially follows from Cassels' proof of Lemma 2 from \cite{Cas}. However, because this lemma is not formulated in the language of Lemma \ref{Casselslemma}, and the specific constants are not given, we include a proof.

\begin{proof}
We start our proof by recalling the following well known identity for the Fourier transform of $\mu$\footnote{This identity is most commonly stated in the context of the Fourier transform of the $(1/2,1/2)$ self-similar measure on $C$. However, because this self-similar measure coincides with the $\frac{\log 2}{\log 3}$-dimensional Hausdorff measure restricted to $C$ there is no issue here.}:
\begin{equation*}
\left|\int e(lx)\,d\mu\right|=\prod_{k=1}^{\infty}|\cos(l\pi/3^{k})|.
\end{equation*}
Now let us fix $N\in\mathbb{N}$ and $l\in\mathbb{Z}\setminus \{0\}$. For even $n$ we have that $$\int e(l2^nx)\, d\mu=\int e(l4^{n/2}x)\, d\mu,$$ and for odd $n$ we have that $$\int e(l2^nx)\, d\mu=\int e(2l4^{(n-1)/2}x)\, d\mu.$$ Therefore to prove our result it suffices to show that there exists $C_{1},C_{2},C_{3},C_{4}>0$ that do not depend upon $N$ or $l$ such that 
\begin{equation}
\label{WTS1}
\# \left\{0\leq n<N/2:\left|\int e(l4^nx)\, d\mu\right|> C_{1}N^{-0.078}\right\}\leq C_2N^{0.922},
\end{equation}
and 
\begin{equation}
\label{WTS2}
\# \left\{0\leq n<N/2:\left|\int e(2l4^nx)\, d\mu\right|> C_{3}N^{-0.078}\right\}\leq C_4N^{0.922}.
\end{equation}  As we will see below, the benefit of rephrasing our problem in terms of powers of $4$ is that they have useful properties when considered modulo powers of $3$. The proofs of \eqref{WTS1} and \eqref{WTS2} are analogous so we only give the details for \eqref{WTS1}.

We start our proof of \eqref{WTS1} by defining $r\in\mathbb{N}$ via the inequalities 
\begin{equation}
\label{r equation}
3^{r-1}< N/2\leq 3^{r}.
\end{equation} Importantly $4$ has the property that $4 \equiv 1 \mod 3$ and $4\not\equiv 1 \mod 9$. Then as is asserted by Cassels in \cite{Cas}, it is a consequence of this property that the sequence $(4^{n})_{0\leq n<3^r}$ runs modulo $3^{r+1}$ through all residue classes which are congruent to $1$ modulo $3$ (for a detailed proof of this fact see \cite[Proof of Lemma 6.2]{Bug3}). We write $l=3^{m}l'$ where $l'$ is coprime to $3$. Then the sequence $(l4^{n})_{0\leq n<3^r}$ when taken modulo $3^{r+m+1}$ runs through all residue classes that are congruent to $l$ modulo $3^{m+1}$. This implies that if we consider the base three expansions of our numbers of the form $l4^{n}$, i.e. the sequences of zeros, ones, and twos for which $l4^{n}=\sum_{j=0}^{\infty}a_{j}(n)3^{j},$ then for any $b_{1}\ldots b_{r}\in\{0,1,2\}^{r}$ there exists a unique $0\leq n<3^r$ such that the base three expansion of $l4^{n}$ satisfies $a_{m+1}(n)\ldots a_{m+r}(n)=b_{1}\ldots b_{r}.$ Put more succinctly, we have 
\begin{equation}
\label{Bijection}
\{a_{m+1}(n)\ldots a_{m+r}(n):0\leq n<3^r\}=\{0,1,2\}^{r}.
\end{equation}
We now split our parameter space of $n$ into two sets:
$$S_{1}:=\left\{0\leq n<3^{r}:\#\{m+1\leq j\leq m+r:a_{j}(n)=1\}\geq  \frac{r}{8}\right\}$$ and $$S_{2}:=\{0\leq n<3^r\}\setminus S_{1}.$$ For each value of $j$ for which $a_{j}(n)=1,$ there exists a value of $k$ that can be chosen uniquely such that $\cos(l4^n\pi/3^{k})$ takes values in $[-1/2,1/2]$. Therefore for $n\in S_{1}$ we have
$$\left|\int e(l4^nx)\, d\mu\right|=\prod_{k=1}^{\infty}|\cos(l4^n\pi/3^k)|\leq (1/2)^{r/8}=3^{-\frac{r\log 2}{8\log 3}}\leq 2^{\frac{ \log 2}{8\log 3}}N^{-\frac{ \log 2}{8\log 3}}\leq 2^{\frac{ \log 2}{8\log 3}}N^{-0.078}.$$ In the penultimate inequality we used \eqref{r equation}. 

We now bound the cardinality of $S_{2}$. Equation \eqref{Bijection} tells us that the set of words of the form $a_{m+1}(n)\ldots a_{m+r}(n)$ for some $0\leq n<3^r$ coincides with $\{0,1,2\}^r.$ As such we can apply a well known large deviation inequality due to Hoeffding \cite{Hoe} to assert that 
\begin{align*}
\#S_{2}=\# \left\{b_1\ldots b_r\in \{0,1,2\}^{r}:\#\{1\leq j\leq r:b_{j}=1\}<\frac{r}{8}\right\}&\leq 2\cdot 3^r\cdot e^{-25r/288}\\
&=2\cdot 3^{r}\cdot 3^{\frac{-25r}{288\log 3}}\\
&=2\cdot 3^{r(1-\frac{25}{288\log 3})}\\
&\leq 2\cdot \left(\frac{3}{2}\right)^{1-\frac{25}{288\log 3}} N^{1-\frac{25}{288\log 3}}\\
&\leq 2\cdot \left(\frac{3}{2}\right)^{1-\frac{25}{288\log 3}} N^{0.922}
\end{align*} \eqref{WTS1} now follows upon taking $C_{1}=2^{\frac{ \log 2}{8\log 3}}$, $C_{2}=2\cdot \left(\frac{3}{2}\right)^{1-\frac{25}{288\log 3}}$, and observing that the cardinality of $S_{2}$ is an upper bounds for the cardinality of the set appearing on the left hand side of \eqref{WTS1}.
\end{proof}

\subsection{Integral estimates}
Given $a,b>0$ such that $a<b$ and a sequence $\mathbf{x}\in(\mathbb{R}/\mathbb{Z})^{\mathbb{N}},$ we let $(f_n)_{n=1}^{\infty}$ be a sequence of $C^2$ functions mapping the real numbers to the real numbers such that the following properties are satisfied:
\begin{enumerate}
	\item Each $f_n$ is $\mathbb{Z}$ periodic.
	\item $0\leq f_{n}(x)\leq 1$ for every $x\in\mathbb{R}$ and $n\in \mathbb{N}$.
	\item $f_{n}(x)=1$ for $x\in [x_n-an^{-0.01},x_n+an^{0.01}]+\mathbb{Z}.$
	\item $f_{n}(x)=0$ for $x\notin [x_n-bn^{-0.01},x_n+bn^{0.01}]+\mathbb{Z}.$
	\item $\|f'_{n}\|_{\infty}\ll n^{0.01}.$
	\item $\|f''_{n}\|_{\infty}\ll n^{0.02}.$
\end{enumerate}
We emphasise that the underlying constants appearing in $5.$ and $6.$ do depend upon $a$ and $b$. However this dependence won't influence our analysis so we suppress it from our notation. The bound on the second derivative of $f_n$ coming from property $6,$ together with the mean value theorem, tells us that we can take a modulus of continuity $\omega$ for $f_n'$ so that $\omega(x)\ll n^{0.02}|x|$. Now using Lemma \ref{Jackson's inequality}, we see that for any $N\in\mathbb{N}$ we have  
\begin{equation}
\label{errorbound}
\sum_{l\notin [-N,N]}c_{l,n}e^{2\pi ilx}=\O\left(\frac{n^{0.02}\log N}{N^{2}}\right).
\end{equation}
In \eqref{errorbound} and in what follows, for each $n\in\mathbb{N}$ we let $(c_{l,n})_{l\in\mathbb{Z}}$ denote the Fourier coefficients of $f_n$. Throughout this section we will suppress the dependence of $(f_n)_{n=1}^{\infty}$ and $(c_{l,n})_{l\in\mathbb{Z}}$ upon $a,$ $b,$ and $\mathbf{x}$ from our notation. The following two lemmas estimate the expectation and the variance of the random variable $\sum_{n=1}^{N}f_{n}(2^nx)$ for $x$ distributed according to $\mu$.
\begin{lemma}
	\label{Lemma1}
Let $(f_n)_{n=1}^{\infty}$ be a sequence of functions satisfying properties $1-6$ for some $a,b$ and $\mathbf{x}$. Then for any $N\in\mathbb{N}$ we have 
$$\int \sum_{n=1}^{N}f_{n}(2^nx)\, d\mu = \sum_{n=1}^{N}c_{0,n}+\O\left(N^{0.96}\log N\right)$$ and $$\int \sum_{n=1}^{N}f_{n}(2^nx)\, d\mu \asymp N^{0.99}.$$
\end{lemma}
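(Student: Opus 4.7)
The plan is to Fourier-expand each $f_n$, integrate term-by-term against $\mu$, and control the resulting oscillatory sums using Lemma \ref{Casselslemma}. For each $n$, write
\[ f_n(y) = \sum_{l \in \mathbb{Z}} c_{l,n}\, e^{2\pi i l y}, \]
substitute $y = 2^n x$, and integrate against $\mu$. Truncating at $|l| \leq N$ and invoking \eqref{errorbound} (with truncation level equal to $N$), one obtains for each $n$
\[ \int f_n(2^n x)\, d\mu = c_{0,n} + \sum_{0 < |l| \leq N} c_{l,n} \int e(l 2^n x)\, d\mu + O\!\left(\frac{n^{0.02} \log N}{N^{2}}\right). \]
Summed over $1 \leq n \leq N$, these tail errors accumulate to at most $O(N^{1.02} \log N / N^{2}) = o(1)$.

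The heart of the argument is bounding $\sum_{n=1}^N \sum_{0 < |l| \leq N} c_{l,n} \int e(l 2^n x)\, d\mu$. I would swap the order of summation and estimate each $l$ separately. For fixed $l \neq 0$, Lemma \ref{Casselslemma} partitions $\{1, \ldots, N\}$ into a ``bad'' set $B_l$ of size at most $C_2 N^{0.922}$ (on which $|\int e(l 2^n x)\, d\mu| > C_1 N^{-0.078}$) and its ``good'' complement. Two integrations by parts in the definition of $c_{l,n}$, together with $\|f_n''\|_\infty \ll n^{0.02}$, yield the decay $|c_{l,n}| \ll n^{0.02}/l^{2}$. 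The bad contribution for this $l$ is then at most $\sum_{n \in B_l} n^{0.02}/l^{2} \leq C_2 N^{0.942}/l^{2}$, and the good contribution at most $C_1 N^{-0.078} \sum_{n=1}^N n^{0.02}/l^{2} \ll N^{0.942}/l^{2}$. Summing over $l \neq 0$ and using $\sum_{l \geq 1} l^{-2} < \infty$ gives a total error of $O(N^{0.942})$, comfortably within the claimed $O(N^{0.96} \log N)$. This establishes the first display.

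For the $\asymp N^{0.99}$ assertion, observe that $c_{0,n} = \int_0^1 f_n(y)\, dy$ satisfies $2 a n^{-0.01} \leq c_{0,n} \leq 2 b n^{-0.01}$, since $f_n \equiv 1$ on an interval of length $2 a n^{-0.01}$ and vanishes outside an interval of length $2 b n^{-0.01}$. Hence $\sum_{n=1}^N c_{0,n} \asymp \sum_{n=1}^N n^{-0.01} \asymp N^{0.99}$, and the $O(N^{0.96} \log N)$ error is of strictly smaller order; together with the first display this yields the asymptotic equivalence.

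The main technical point is the interplay of two decay mechanisms: decay in $l$ from the $C^{2}$ smoothness of $f_n$ (giving $|c_{l,n}| \ll n^{0.02}/l^{2}$) and decay in $n$ from Lemma \ref{Casselslemma}. The $C^{2}$ hypothesis is essential, since without the $1/l^{2}$ factor the outer sum over $l$ would not converge; Jackson's inequality is what allows the high-frequency tail to be discarded cleanly. Once both ingredients are arranged, the remaining task is careful bookkeeping of the exponents.
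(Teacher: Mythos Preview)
Your argument is correct, and in fact yields a sharper error term $O(N^{0.942})$ than the paper's $O(N^{0.96}\log N)$. The route, however, differs from the paper's in one essential respect.

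The paper truncates the Fourier series of each $f_n$ at level $\lfloor N^{0.03}\rfloor$ rather than $N$. With this low truncation, the Jackson tail \eqref{errorbound} contributes $O(n^{0.02}\log N/N^{0.06})$ per $n$, which sums to the dominant error $O(N^{0.96}\log N)$. The gain is that only $O(N^{0.03})$ frequencies $l$ remain, so the crude uniform bound $|c_{l,n}|\ll n^{-0.01}$ (from the support of $f_n$) suffices: applying Lemma~\ref{Casselslemma} gives $\ll N^{0.922}$ for each $l$, and summing over $O(N^{0.03})$ values of $l$ yields $O(N^{0.952})$. Your approach instead truncates at level $N$, making the Jackson tail negligible, and then uses the $C^2$ hypothesis via two integrations by parts to obtain $|c_{l,n}|\ll n^{0.02}/l^{2}$; the $1/l^{2}$ factor renders the sum over $l$ absolutely convergent. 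Thus the paper trades away the $l^{-2}$ decay and compensates with a short $l$-range, while you trade away the short range and compensate with the decay. Your version extracts more from property~6 and gives the cleaner bound; the paper's version shows that even the uniform bound $|c_{l,n}|\ll n^{-0.01}$ is enough, so contrary to your closing remark the $C^2$ hypothesis is not strictly essential for this lemma in the paper's arrangement (it is used elsewhere, and to state \eqref{errorbound}).
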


\begin{proof}[Proof of Lemma \ref{Lemma1}]
	Fix $N\in\mathbb{N}$. Using \eqref{errorbound} and the fact that each $f_n$ coincides with its series, we have  $$\int \sum_{n=1}^{N}f_{n}(2^nx)\, d\mu = \sum_{n=1}^{N}c_{0,n}+\int \sum_{n=1}^{N}\sum_{l\in [-\lfloor N^{0.03}\rfloor ,\lfloor N^{0.03}\rfloor]\setminus \{0\}}c_{l,n}e(l2^nx)\, d\mu+\O\left(N^{0.96}\log N\right).$$ So to complete our proof of the first part of the lemma, it suffices to show that
	\begin{equation}
	\label{RTS}
	\int \sum_{n=1}^{N}\sum_{l\in [-\lfloor N^{0.03}\rfloor ,\lfloor N^{0.03}\rfloor]\setminus \{0\}}c_{l,n}e(l2^nx)\, d\mu=\O\left(N^{0.96}\log N\right).
	\end{equation}
We have 
\begin{equation}
\label{Lazy rewrite}
\int \sum_{n=1}^{N}\sum_{l\in [-\lfloor N^{0.03}\rfloor,\lfloor N^{0.03}\rfloor]\setminus \{0\}}c_{l,n}e(l2^nx)\, d\mu=\sum_{l\in [-\lfloor N^{0.03}\rfloor,\lfloor N^{0.03}\rfloor]\setminus \{0\}}\sum_{n=1}^{N}c_{l,n}\int e(l2^nx)\, d\mu.
\end{equation} Now let $C_{2}$ be as in Lemma \ref{Casselslemma}. Using this lemma and the fact that $c_{l,n}\ll n^{-0.01}$ for all $l\in\mathbb{Z},$ we have the following for any $l\in [-\lfloor N^{0.03}\rfloor,\lfloor N^{0.03}\rfloor]\setminus \{0\}:$
	\begin{align*}
	\left|\sum_{n=1}^{N}c_{l,n}\int e(l2^nx)\, d\mu\right|&\ll \sum_{n=1}^{\lfloor C_{2}N^{0.922}\rfloor }\frac{1}{n^{0.01}}+\sum_{n=1}^{N}\frac{N^{-0.078}}{n^{0.01}}\\
	&\ll N^{0.922\cdot 0.99}+N^{0.99-0.078}\\
	&\ll N^{0.922}.
	\end{align*}
	Applying this upper bound we see that 
	$$\left|\sum_{l\in [-\lfloor N^{0.03}\rfloor,\lfloor N^{0.03}\rfloor]\setminus \{0\}}\sum_{n=1}^{N}c_{l,n}\int e(l2^nx)\, d\mu\right|=\O(N^{0.952}).$$ Therefore by \eqref{Lazy rewrite} we see that \eqref{RTS} holds and our proof of the first part of this lemma is complete. 
	
	The second part of this lemma follows from the first part and the fact that $c_{0,n}\asymp n^{-0.01}$ for all $n.$  
\end{proof}

\begin{lemma}
	\label{Lemma2}
Let $(f_n)_{n=1}^{\infty}$ be a sequence of functions satisfying properties $1-6$ for some $a,b$ and $\mathbf{x}$. For any $N\in\mathbb{N}$ we have 
$$\int \left(\sum_{n=1}^{N}f_{n}(2^nx)-\int \sum_{n=1}^{N}f_{n}(2^ny)\, d\mu(y) \right)^2\, d\mu(x)=\O(N^{1.977}\log N).$$
\end{lemma}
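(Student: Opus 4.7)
The plan is to expand $\bigl(S_N - \int S_N\,d\mu\bigr)^2$, where $S_N(x) := \sum_{n=1}^N f_n(2^nx)$, into a double Fourier series and then apply Lemma \ref{Casselslemma} term by term. First I would truncate: by Lemma \ref{Jackson's inequality} with modulus of continuity $\omega(t) \ll n^{0.02}t$ (from property 6 and the mean value theorem), replace each $f_n$ by $P_n(x) := \sum_{|l|\leq M}c_{l,n}e(lx)$, where $M := \lfloor N^{0.03}\rfloor$. The tail $R_n := f_n - P_n$ satisfies $\|R_n\|_\infty \ll n^{0.02}\log N/N^{0.06}$, so the aggregate error $E_N(x) := \sum_{n=1}^N R_n(2^nx)$ has $\|E_N\|_\infty \ll N^{0.96}\log N$ and variance $\O\bigl(N^{1.92}(\log N)^2\bigr)$. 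By Minkowski's inequality applied to $L^2(\mu)$, the task thus reduces to bounding the variance of the truncated sum $T_N(x) := \sum_{n=1}^N P_n(2^nx)$.

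Since the $l=0$ modes cancel in $T_N - \int T_N\,d\mu$, squaring and integrating against $\mu$ produces the double sum
\[
\int\Bigl(T_N(x) - \int T_N\,d\mu\Bigr)^2 d\mu(x) = \sum_{n,m=1}^N\sum_{\substack{0<|l_1|,|l_2|\leq M}}c_{l_1,n}c_{l_2,m}\Bigl[\int e\big((l_12^n{+}l_22^m)x\big)\,d\mu - \int e(l_12^nx)\,d\mu\cdot\int e(l_22^my)\,d\mu\Bigr],
\]
which I split by the triangle inequality into a ``product piece'' and a ``combined piece''. The product piece factors as $\bigl(\sum_{n,l}|c_{l,n}|\,|\int e(l2^nx)\,d\mu|\bigr)^2$; for each fixed $l\neq 0$, Lemma \ref{Casselslemma} partitions $\{1,\ldots,N\}$ into a Cassels-good set where $|\int e(l2^nx)\,d\mu|\leq C_1 N^{-0.078}$ and a Cassels-bad set of size $\leq C_2 N^{0.922}$, and using $|c_{l,n}|\ll n^{-0.01}$ both subsums contribute $\O(N^{0.913})$ per $l$. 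Summing over $|l|\leq M$ and squaring gives $\O(N^{1.886})$.

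For the combined piece, WLOG $n\leq m$; set $j := m - n$ and $k' := l_1 + l_2 2^j$, so $\int e((l_12^n+l_22^m)x)\,d\mu = \int e(2^nk'x)\,d\mu$. Three subcases arise. \emph{Resonance} $k'=0$ forces $l_1 = -l_2 2^j$ with $|l_2 2^j|\leq M$, giving $\O(\log M)$ admissible $j$'s per $l_2$ and total contribution $\O\bigl(M(\log M)N^{0.98}\bigr) = \O(N^{1.01}\log N)$. \emph{Non-resonance with $n$ Cassels-good for $k'$}: here $|\int e(2^nk'x)\,d\mu|\leq C_1 N^{-0.078}$, and combining with the interpolated bound $\sum_{|l|\leq M}|c_{l,n}|\ll n^{0.005}$ (obtained from $|c_{l,n}|\ll \min(n^{-0.01}, n^{0.02}/l^2)$) yields $\O(N^{-0.078}\cdot N^{2.01}) = \O(N^{1.932})$. \emph{Non-resonance with $n$ Cassels-bad for $k'$}: each triple $(l_1,l_2,j)$ with $k'\neq 0$ has at most $C_2 N^{0.922}$ bad $n$'s, so $\sum_{n\in B(k')}n^{-0.01}(n+j)^{-0.01}\ll N^{0.904}$ per triple; summing over the $\ll M^2 N$ triples yields $\O(M^2 N\cdot N^{0.904}) = \O(N^{1.964})$.

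Adding the four contributions yields the required variance bound $\O(N^{1.977}\log N)$, and combining with the tail-variance bound completes the proof. The main obstacle is the non-resonance bad-$n$ case: since the Cassels-bad set depends on the coupled integer $k' = l_1 + l_2 2^{m-n}$, the sum does not factor across the $(n,m,l_1,l_2)$ variables as in the product piece, so careful bookkeeping is needed to balance the truncation parameter $M = N^{0.03}$ against the Cassels exponent $N^{0.922}$ and the Jackson-tail loss.
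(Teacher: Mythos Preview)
Your proposal is correct and follows essentially the same strategy as the paper: truncate each $f_n$ by Jackson's inequality, expand the square, and control the resulting exponential sums via Lemma~\ref{Casselslemma}, isolating the resonant frequencies $l_1 2^n + l_2 2^m = 0$ for separate (trivial) treatment. The differences are cosmetic: the paper truncates at $\lfloor N^{0.033}\rfloor$ rather than $\lfloor N^{0.03}\rfloor$, absorbs the Jackson tail by multiplying out the squared bracket directly (producing cross terms that yield the exponent $1.977$) rather than via Minkowski, and does not invoke the sharper $\ell^1$ bound $\sum_{|l|\le M}|c_{l,n}|\ll n^{0.005}$---it simply uses $|c_{l,n}|\ll n^{-0.01}$ throughout, which already suffices.
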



	



\begin{proof}[Proof of Lemma \ref{Lemma2}]
Fix $N\in\mathbb{N}$. Multiplying out the bracket we have that 
\begin{align}
\label{Anna}
\int \left(\sum_{n=1}^{N}f_{n}(2^nx)-\int \sum_{n=1}^{N}f_{n}(2^ny)\, d\mu(y) \right)^2\,d\mu=\int \left( \sum_{n=1}^{N}f_{n}(2^nx)\right)^2 d\mu-\left( \int \sum_{n=1}^{N}f_{n}(2^nx)\, d\mu\right)^2.
\end{align}
 Focusing on the first term on the right hand side of \eqref{Anna}, we can use \eqref{errorbound} to assert that
\begin{align}
\label{Anna1}
&\int \left( \sum_{n=1}^{N}f_{n}(2^nx)\right)^2 d\mu\nonumber \\
=&\int \left(\sum_{n=1}^{N}c_{0,n}+\sum_{n=1}^{N}\sum_{l\in [-\lfloor N^{0.033}\rfloor,\lfloor N^{0.033}\rfloor]\setminus \{0\}}c_{l,n}e(l2^nx)+\O\left(N^{0.954}\log N\right)\right)^{2}\, d\mu.
\end{align} We want to eventually multiply out the bracket in \eqref{Anna1} and individually analyse the remaining terms. Before doing that we collect some straightforward bounds:
\begin{equation}
\label{trivial1}
\sum_{n=1}^{N}c_{0,n}\ll N^{0.99}
\end{equation}
and
\begin{equation}
\label{trivial2}
\left|\sum_{n=1}^{N}\sum_{l\in [-\lfloor N^{0.033}\rfloor,\lfloor N^{0.033}\rfloor]\setminus \{0\}}c_{l,n}e(l2^nx)\right|\ll \sum_{l\in [-\lfloor N^{0.033}\rfloor,\lfloor N^{0.033}\rfloor]\setminus \{0\}}\sum_{n=1}^{N}n^{-0.01}\ll N^{1.023}.
\end{equation}
\eqref{trivial1} follows from Lemma \ref{Lemma1}. \eqref{trivial2} uses the fact that $c_{l,n}\ll n^{-0.01}$ for all $l$ and $n$. Using \eqref{trivial1} and \eqref{trivial2}, we see that if we multiply out the bracket in \eqref{Anna1} we obtain the following:
\begin{align*}
&\int \left( \sum_{n=1}^{N}f_{n}(2^nx)\right)^2 d\mu\\
=&\underbrace{\left(\sum_{n=1}^{N}c_{0,n}\right)^{2}}_{A}+\underbrace{2\sum_{n=1}^{N}c_{0,n}\int \sum_{n=1}^{N}\sum_{l\in [-\lfloor N^{0.033}\rfloor,\lfloor N^{0.033}\rfloor]\setminus \{0\}}c_{l,n}e(l2^nx)\, d\mu}_{B}\\
&+\underbrace{\int \sum_{n=1}^{N}\sum_{l\in [-\lfloor N^{0.033}\rfloor,\lfloor N^{0.033}\rfloor]\setminus\{0\}}\sum_{m=1}^{N}\sum_{j\in [-\lfloor N^{0.033}\rfloor,\lfloor N^{0.033}\rfloor]\setminus\{0\}}c_{l,n}d_{j,m}e((l2^n+j2^m)x)\, d\mu}_{C}\\
&+\O(N^{1.944}\log N)+\O(N^{1.908}(\log N)^2)+\O(N^{1.977}\log N) .
\end{align*} These final three error terms are all $\O(N^{1.977}\log N)$. Therefore it remains to consider terms $A$, $B$, and $C$. By Lemma \ref{Lemma1} we know that 
$$\left(\int \sum_{n=1}^{N}f_{n}(2^nx)\, d\mu\right)^{2} = \left(\sum_{n=1}^{N}c_{0,n}\right)^{2}+\O(N^{1.95}\log N).$$ Therefore if we subtract the second term on the right hand side of \eqref{Anna} from term $A$ we have 
$$\left(\sum_{n=1}^{N}c_{0,n}\right)^{2}-\left( \int \sum_{n=1}^{N}f_{n}(2^nx)\, d\mu\right)^2=\O(N^{1.95}\log N).$$
It follows that to complete our proof we need to show that the two remaining terms $B$ and $C$ are $\O(N^{1.977}\log N)$. This we do below.

Duplicating the analysis given in the proof of Lemma \ref{Lemma1}, we can show that $$\left|\int \sum_{n=1}^{N}\sum_{l\in [-\lfloor N^{0.033}\rfloor,\lfloor N^{0.033}\rfloor]\setminus \{0\}}c_{l,n}e(l2^nx)\, d\mu\right|=O\left(N^{0.955}\right).$$ Now using Lemma \ref{Lemma1} it follows that
$$\left|\sum_{n=1}^{N}c_{0,n}\int \sum_{n=1}^{N}\sum_{l\in [-\lfloor N^{0.033}\rfloor,\lfloor N^{0.033}\rfloor]\setminus \{0\}}c_{l,n}e(l2^nx)\, d\mu\right|=\O(  N^{1.945}).$$
Therefore term $B$ is $\O(N^{1.977}\log N).$

Now we focus on term $C$. By considering the cases when $m=n$ and $m\neq n,$ we see that we can write term $C$ as follows:
\begin{align*}
&\int \sum_{n=1}^{N}\sum_{l\in [-\lfloor N^{0.033}\rfloor,\lfloor N^{0.033}\rfloor]\setminus\{0\}}\sum_{m=1}^{N}\sum_{j\in [-\lfloor N^{0.033}\rfloor,\lfloor N^{0.033}\rfloor]\setminus\{0\}}c_{l,n}d_{j,m}e((l2^n+j2^m)x)\, d\mu\\
=&\underbrace{\int\sum_{n=1}^{N}\sum_{l\in [-\lfloor N^{0.033}\rfloor,\lfloor N^{0.033}\rfloor]\setminus\{0\}}\sum_{j\in [-\lfloor N^{0.033}\rfloor,\lfloor N^{0.033}\rfloor]\setminus\{0\}}c_{l,n}d_{j,n}e((l2^n+j2^n)x)\, d\mu}_{D}\\
&+2\underbrace{\int \sum_{n=1}^{N-1}\sum_{l\in [-\lfloor N^{0.033}\rfloor,\lfloor N^{0.033}\rfloor]\setminus\{0\}}\sum_{m=n+1}^{N}\sum_{j\in [-\lfloor N^{0.033}\rfloor,\lfloor N^{0.033}\rfloor]\setminus\{0\}}c_{l,n}d_{j,m}e((l2^n+j2^m)x)\, d\mu}_{E}.
\end{align*}
Taking the trivial upper bound of $1$ for the absolute value of the terms appearing in $D,$ we obtain the following upper bound
$$\int\sum_{n=1}^{N}\sum_{l\in [-\lfloor N^{0.033}\rfloor,\lfloor N^{0.033}\rfloor]\setminus\{0\}}\sum_{j\in [-\lfloor N^{0.033}\rfloor,\lfloor N^{0.033}\rfloor]\setminus\{0\}}c_{l,n}d_{j,n}e((l2^n+j2^n)x)\, d\mu=\O(N^{1.066}).$$ Therefore term $D$ is $\O(N^{1.977}\log N)$ and it now suffices to bound term $E$. We start by bounding term $E$ from above by an expression to which Lemma \ref{Casselslemma} can be applied:
\begin{align*}
&\int \sum_{n=1}^{N-1}\sum_{l\in [-\lfloor N^{0.033}\rfloor,\lfloor N^{0.033}\rfloor]\setminus\{0\}}\sum_{m=n+1}^{N}\sum_{j\in [-\lfloor N^{0.033}\rfloor,\lfloor N^{0.033}\rfloor]\setminus\{0\}}c_{l,n}d_{j,m}e((l2^n+j2^m)x)\, d\mu\\
=&\int \sum_{n=1}^{N-1}\sum_{l\in [-\lfloor N^{0.033}\rfloor,\lfloor N^{0.033}\rfloor]\setminus\{0\}}\sum_{m=n+1}^{N}\sum_{j\in [-\lfloor N^{0.033}\rfloor,\lfloor N^{0.033}\rfloor]\setminus\{0\}}c_{l,n}d_{j,m}e(2^{n}(l+j2^{m-n})x)\, d\mu\\
\ll & \sum_{n=1}^{N-1}\sum_{l\in [-\lfloor N^{0.033}\rfloor,\lfloor N^{0.033}\rfloor]\setminus\{0\}}\sum_{k=1}^{N-n}\sum_{j\in [-\lfloor N^{0.033}\rfloor,\lfloor N^{0.033}\rfloor]\setminus\{0\}}\frac{1}{n^{0.02}}\left|\int e(2^{n}(l+j2^k)x)\, d\mu\right|\\
\ll & \sum_{n=1}^{N-1}\sum_{l\in [-\lfloor N^{0.033}\rfloor,\lfloor N^{0.033}\rfloor]\setminus\{0\}}\sum_{k=1}^{N}\sum_{j\in [-\lfloor N^{0.033}\rfloor,\lfloor N^{0.033}\rfloor]\setminus\{0\}}\frac{1}{n^{0.02}}\left|\int e(2^{n}(l+j2^k)x)\, d\mu\right|\\
= & \sum_{l\in [-\lfloor N^{0.033}\rfloor,\lfloor N^{0.033}\rfloor]\setminus\{0\}}\sum_{k=1}^{N}\sum_{j\in [-\lfloor N^{0.033}\rfloor,\lfloor N^{0.033}\rfloor]\setminus\{0\}} \sum_{n=1}^{N-1}\frac{1}{n^{0.02}}\left|\int e(2^{n}(l+j2^k)x)\, d\mu\right|.
\end{align*}
In the third line in the above we have used that $c_{l,n}\ll n^{-0.01}$ for all $n$ and $d_{j,m}\ll n^{-0.01}$ for all $m>n$. To bound the final term in the above we have to be careful to manage those parameters for which $l+j2^k=0$. Rearranging this equation and taking logarithms, we see that if $k$ is such that there exists $l,j\in [-\lfloor N^{0.033}\rfloor,\lfloor N^{0.033}\rfloor]\setminus\{0\}$ for which $l+j2^{k}=0,$ then $k\leq \lfloor 0.033\log_{2}N \rfloor.$ With this condition in mind we split the final term above as follows: 
\begin{align*}
&\sum_{l\in [-\lfloor N^{0.033}\rfloor,\lfloor N^{0.033}\rfloor]\setminus\{0\}}\sum_{k=1}^{N}\sum_{j\in [-\lfloor N^{0.033}\rfloor,\lfloor N^{0.033}\rfloor]\setminus\{0\}} \sum_{n=1}^{N-1}\frac{1}{n^{0.02}}\left|\int e(2^{n}(l+j2^k)x)\, d\mu\right|\\
=&\underbrace{\sum_{l\in [-\lfloor N^{0.033}\rfloor,\lfloor N^{0.033}\rfloor]\setminus\{0\}}\sum_{k=1}^{\lfloor 0.033\log_{2}N \rfloor}\sum_{j\in [-\lfloor N^{0.033}\rfloor,\lfloor N^{0.033}\rfloor]\setminus\{0\}} \sum_{n=1}^{N-1}\frac{1}{n^{0.02}}\left|\int e(2^{n}(l+j2^k)x)\, d\mu\right|}_{F}\\
+& \underbrace{\sum_{l\in [-\lfloor N^{0.033}\rfloor,\lfloor N^{0.033}\rfloor]\setminus\{0\}}\sum_{k=\lfloor 0.033\log_{2}N \rfloor+1}^{N}\sum_{j\in [-\lfloor N^{0.033}\rfloor,\lfloor N^{0.033}\rfloor]\setminus\{0\}} \sum_{n=1}^{N-1}\frac{1}{n^{0.02}}\left|\int e(2^{n}(l+j2^k)x)\, d\mu\right|}_{G}.
\end{align*}To complete our proof it now suffices to show that term $F$ and term $G$ are both $\O(N^{1.977}\log N).$ Taking the trivial upper bound of $1$ for the absolute value of the terms appearing in the summation in $F$, we have 
\begin{align*}
\sum_{l\in [-\lfloor N^{0.033}\rfloor,\lfloor N^{0.033}\rfloor]\setminus\{0\}}\!\sum_{k=1}^{\lfloor 0.033\log_{2}N \rfloor}\sum_{j\in [-\lfloor N^{0.033}\rfloor,\lfloor N^{0.033}\rfloor]\setminus\{0\}} \!\sum_{n=1}^{N-1}\frac{1}{n^{0.02}}\left|\int e(2^{n}(l+j2^k)x)\, d\mu\right|=\O(N^{1.066}\log N).
\end{align*}
So term $F$ is $\O(N^{1.977}\log N)$. Now we focus on term $G$. If $l+j2^{k}\neq 0$ then we can apply Lemma \ref{Casselslemma} to assert the following: 
\begin{align*}
\sum_{n=1}^{N-1}\frac{1}{n^{0.02}}\left|\int e(2^{n}(l+j2^k)x)\, d\mu\right|&\ll \sum_{n=1}^{\lfloor C_{2} N^{0.922}\rfloor }\frac{1}{n^{0.02}}+ \sum_{n=1}^{N}\frac{N^{-0.078}}{n^{0.02}}\\
&\ll N^{0.922\cdot 0.98}+N^{0.98-0.078}\\
&\ll N^{0.904}.
\end{align*}
Because each parameter $k$ appearing in term $G$ satisfies $k\geq\lfloor 0.033\log_{2}N \rfloor+1,$ we know that $l+j2^k\neq 0$ for all $l,j\in [-\lfloor N^{0.033}\rfloor,\lfloor N^{0.033}\rfloor]\setminus\{0\}$. Therefore we can freely apply the bound above and the following holds: 
\begin{align*}
\sum_{l\in [-\lfloor N^{0.033}\rfloor,\lfloor N^{0.033}\rfloor]\setminus\{0\}}&\sum_{k=\lfloor 0.033\log_{2}N \rfloor+1}^{N}\sum_{j\in [-\lfloor N^{0.033}\rfloor,\lfloor N^{0.033}\rfloor]\setminus\{0\}} \sum_{n=1}^{N-1}\frac{1}{n^{0.02}}\left|\int e(2^{n}(l+j2^k)x)\, d\mu\right|\\
\ll &\sum_{l\in [-\lfloor N^{0.033}\rfloor,\lfloor N^{0.033}\rfloor]\setminus\{0\}}\sum_{k=\lfloor 0.033\log_{2}N \rfloor+1}^{N}\sum_{j\in [-\lfloor N^{0.033}\rfloor,\lfloor N^{0.033}\rfloor]\setminus\{0\}} N^{0.904}\\
\ll &N^{1.97}.
\end{align*}
Therefore term $G$ is also $\O(N^{1.977}\log N)$. This completes our proof.  
\end{proof}

\subsection{Completing the proof of Theorem \ref{Main thm}}

Equipped with Lemma \ref{Lemma1} and Lemma \ref{Lemma2} we are now in a position to prove Theorem \ref{Main thm}.

\begin{proof}[Proof of Theorem \ref{Main thm}]
Fix $\mathbf{x}\in(\mathbb{R}/\mathbb{Z})^{\mathbb{N}}.$ Let $\epsilon>0$ be arbitrary and $(f_n)_{n=1}^{\infty}$ be a sequence of $C^2$ functions satisfying properties $1-6$ for $a=1,$ $b=1+\epsilon,$ and $\mathbf{x}$. It follows from these properties and this choice of parameters that we have the following upper bound for any $x\in C$
\begin{equation}
\label{Counting upper bound}
\#\{1\leq n\leq N:d(T_{2}^{n}(x),x_n)\leq n^{-0.01}\}\leq \sum_{n=1}^{N}f_{n}(2^{n}x).
\end{equation}
By our choice of $a$ and $b$ it is also clear that 
\begin{equation}
\label{abequation}
\sum_{n=1}^{N}c_{0,n}\leq 2(1+\epsilon)\sum_{n=1}^{N}n^{-0.01}.
\end{equation}
By Lemma \ref{Lemma2} and Markov's inequality we have the following for any $N\in\mathbb{N}$
\begin{align*}
&\mu\left(x\in C:\left|\sum_{n=1}^{N^{1000}}f_{n}(2^{n}x)-\int \sum_{n=1}^{N^{1000}}f_{n}(2^ny)\, d\mu(y)\right|\geq N^{989.9}\right)\\
=&\mu\left(x\in C:\left(\sum_{n=1}^{N^{1000}}f_{n}(2^{n}x)-\int \sum_{n=1}^{N^{1000}}f_{n}(2^ny)\, d\mu(y)\right)^{2}\geq N^{1979.8}\right)\\
\ll& \frac{N^{1977}\log N}{N^{1979.8}}\\
=&\frac{\log N}{N^{2.8}}.
\end{align*}
Using this bound we see that
$$\sum_{N=1}^{\infty}\mu\left(x\in C:\left|\sum_{n=1}^{N^{1000}}f_{n}(2^{n}x)-\int \sum_{n=1}^{N^{1000}}f_{n}(2^ny)\, d\mu(y)\right|\geq N^{989.9}\right)<\infty.$$
Therefore by the Borel-Cantelli lemma, for $\mu$ almost every $x$ there exists finitely many $N\in\mathbb{N}$ for which 	
\begin{equation}
\label{finitesols}
\left|\sum_{n=1}^{N^{1000}}f_{n}(2^{n}x)-\int \sum_{n=1}^{N^{1000}}f_{n}(2^ny)\, d\mu(y)\right|\geq N^{989.9}.
\end{equation} 
Given an arbitrary $N\in\mathbb{N}$ we now define $K_{N}\in\mathbb{N}$ according to the inequalities $$K_{N}^{1000}\leq N< (K_{N}+1)^{1000}.$$ 
For later use we record here the following straightforward facts
\begin{equation}
\label{Kasymptotics}\lim_{K\to\infty}\frac{\sum_{n=1}^{(K+1)^{1000}}n^{-0.01}}{\sum_{n=1}^{K^{1000}}n^{-0.01}}=1
\end{equation}and 
\begin{equation}
\label{Kgrowth}
\sum_{n=1}^{K^{1000}}n^{-0.01}\asymp K^{990}.
\end{equation}

Now let $x$ belong to the full $\mu$ measure set for which \eqref{finitesols} is satisfied by finitely many $N$. Then we have
\begin{align*}
&\limsup_{N\to\infty}\frac{\{1\leq n\leq N:d(T_{2}^{n}(x),x_n)\leq n^{-0.01}\}}{2\sum_{n=1}^{N}n^{-0.01}}\\
\stackrel{\eqref{Counting upper bound}}{\leq} & \limsup_{N\to\infty}\frac{\sum_{n=1}^{N}f_{n}(2^nx)}{2\sum_{n=1}^{N}n^{-0.01}}\\
\leq & \limsup_{N\to\infty}\frac{\sum_{n=1}^{(K_{N}+1)^{1000}}f_{n}(2^nx)}{2\sum_{n=1}^{K_{N}^{1000}}n^{-0.01}}\\
\stackrel{\eqref{finitesols}}{\leq}& \limsup_{N\to\infty}\frac{\int \sum_{n=1}^{(K_{N}+1)^{1000}}f_{n}(2^ny)\, d\mu(y)+(K_{N}+1)^{989.9}}{2\sum_{n=1}^{K_{N}^{1000}}n^{-0.01}}\\
\stackrel{\text{Lemma }\ref{Lemma1}}{=}&\limsup_{N\to\infty}\frac{\sum_{n=1}^{(K_{N}+1)^{1000}}c_{0,n}+(K_{N}+1)^{989.9}+\O(K_{N}^{960}\log K_{N})}{2\sum_{n=1}^{K_{N}^{1000}}n^{-0.01}}\\
\stackrel{\eqref{abequation}}{\leq}& \limsup_{N\to\infty}\frac{2(1+\epsilon)\sum_{n=1}^{(K_N+1)^{1000}}n^{-0.01}+(K_{N}+1)^{989.9}+\O(K_{N}^{960}\log K_{N})}{2\sum_{n=1}^{K_{N}^{1000}}n^{-0.01}}\\
\stackrel{\eqref{Kasymptotics},\eqref{Kgrowth}}{=}&1+\epsilon.
\end{align*}
Summarising the above, we have shown that for $\mu$ almost every $x$ we have 
$$\limsup_{N\to\infty}\frac{\{1\leq n\leq N:d(T_{2}^{n}(x),x_n)\leq n^{-0.01}\}}{2\sum_{n=1}^{N}n^{-0.01}}\leq 1+\epsilon.$$ Since $\epsilon$ is arbitrary we may conclude that 
\begin{equation}
\label{limsup}
\limsup_{N\to\infty}\frac{\{1\leq n\leq N:d(T_{2}^{n}(x),x_n)\leq n^{-0.01}\}}{2\sum_{n=1}^{N}n^{-0.01}}\leq 1
\end{equation}
for $\mu$ almost every $x$. 

By an analogous argument, this time taking a sequence of functions $(f_n)_{n=1}^{\infty}$ satisfying properties $1-6$ for $a=1-\epsilon$ and $b=1,$ we can show that for $\mu$ almost every $x$ we have 
$$\liminf_{N\to\infty}\frac{\{1\leq n\leq N:d(T_{2}^{n}(x),x_n)\leq n^{-0.01}\}}{2\sum_{n=1}^{N}n^{-0.01}}\geq 1-\epsilon.$$ Because $\epsilon$ is arbitrary, we have that
\begin{equation}
\label{liminf}
\liminf_{N\to\infty}\frac{\{1\leq n\leq N:d(T_{2}^{n}(x),x_n)\leq n^{-0.01}\}}{2\sum_{n=1}^{N}n^{-0.01}}\geq 1
\end{equation}
for $\mu$ almost every $x$. Therefore \eqref{limsup} and \eqref{liminf} hold for $\mu$ almost every $x$. This completes our proof.

\end{proof}

\end{document}